\newtheorem{lemme}{Lemma}
\newtheorem{prop}{Proposition}
\newtheorem{theorem}{Theorem}
\newtheorem{Theo}{Theorem}
\newtheorem{corollary}{Corollary}
\newtheorem{remark}{Remark}
\newcommand\E{ \mathbf{E} }
\newcommand\PP{ \mathbf{P} }
\title{Spectral Measures of Spiked Random Matrices}
 \author{Nathan Noiry}
\date{}
\begin{document}
\maketitle

\abstract{We study two spiked models of random matrices under general frameworks corresponding respectively to additive deformation of random symmetric matrices and multiplicative perturbation of random covariance matrices. In both cases, the limiting spectral measure in the direction of an eigenvector of the perturbation leads to old and new results on the coordinates of eigenvectors.}

\vspace{0.3cm}
\noindent {\bf MSC 2010 Classification:} 60B20.\\
{\bf Keywords:} Spiked random matrices, spectral measures, BBP phase transition, overlaps.

\section{Introduction}

The study of deformed models of random matrices has been the subject of tremendous amount of works in the last decades. In this paper, we study two of them. The first one corresponds to an additive perturbation of a symmetric random matrix (or Wigner matrix):
\[  W_n = \frac{1}{\sqrt{n}}X_n + A_n \]
where
\begin{itemize}
\item $X_n$ is a random symmetric matrix of size $n \times n$, whose entries are, up to symmetry, i.i.d. centered and reduced;
\item $A_n$ is a deterministic symmetric matrix of size $n \times n$ (or random, independent of $X_n$).
\end{itemize}
The second one is a multiplicative deformation of a random covariance matrix (or Wishart matrix):
\[ S_n = \frac{1}{n} \Sigma_n^{1/2} X_n X_n^T \Sigma_n^{1/2},  \]
where
\begin{itemize}
\item $X_n$ is a random matrix of size $n \times n$, $m/n \rightarrow \alpha \in (0,\infty)$, whose entries are i.i.d. centered and reduced;
\item $\Sigma_n$ is a deterministic symmetric matrix (or random, independent of $X_n$) having non-negative eigenvalues.
\end{itemize}
The spectra of these models have been well studied. Let $\mathrm{Spec}(W_n)$ (resp. $\mathrm{Spec}(S_n)$) be the set of eigenvalues of $W_n$ (resp. $S_n$) counted with multiplicities. The empirical spectral measures of $W_n$ and $S_n$ are:
\[  \mu_{W_n} = \frac{1}{n} \sum\limits_{ \lambda \in \mathrm{Spec}(W_n) } \delta_{\lambda} \quad \text{and} \quad \mu_{S_n} = \frac{1}{n} \sum\limits_{ \lambda \in \mathrm{Spec}(S_n) } \delta_{\lambda} . \]
Under mild assumptions, they converge respectively towards probability measures $\mu_{sc} \boxplus \mu_A$ and $\mu_{\mathrm{MP},\alpha} \boxtimes \mu_\Sigma$ properly defined in Subsections \ref{subsec: Wigner general framework} and \ref{subsec: Wishart general framework}.

In both models, we define an outlier as an eigenvalue that does not lie in the neighborhood of the support of the limiting spectrum. Many works identified necessary and sufficient conditions on the spectrum of $A_n$ (resp. $\Sigma_n$) for the appearance of outlier in the spectrum of $W_n$ (resp. $S_n$). The seminal paper is due to Baik, Ben Arous and P\'ech\'e \cite{MR2165575}, who identified a phase transition for the existence of an outlier in the covariance setting with $\Sigma_n = \mathrm{Diag}(\theta,1, \ldots,1)$, $\theta \geq 1$. The most recent results can be found in \cite{MR3729610} and we refer to the survey \cite{MR3792626} for an extensive bibliography. In all previous approaches, two main techniques were used: a clever identity on determinants (first remarked in \cite{MR2782201}), and a precise analysis of the empirical spectral measure.

The goal of this paper is to bring into focus the possible use of the spectral measures in the study of deformed models of random matrices. Let us introduce them. Let $\theta$ be an eigenvalue of $A_n$ (or $\Sigma_n$) which we consider as atypical in that it may be responsible for the existence of an outlier. Denote $v_1^{(n)}$ the associated eigenvector. We will call $\theta$ a spike of $W_n$ (resp. $S_n$) and $v_1^{(n)}$ the direction of the spike. The spectral measures in the direction of the spike are respectively defined by:
\[ \mu_{(W_n,v_1^{(n)})} := \sum\limits_{ \lambda \in \mathrm{Spec}(W_n) } \left| \langle \phi_{\lambda} , v_1^{(n)}   \rangle   \right|^2 \delta_\lambda \quad \text{and} \quad  \mu_{(S_n,v_1^{(n)})} := \sum\limits_{ \lambda \in \mathrm{Spec}(S_n) } \left| \langle \phi_{\lambda} , v_1^{(n)}   \rangle   \right|^2 \delta_\lambda,  \]
where $\phi_\lambda$ is a normalized eigenvector associated to eigenvalue $\lambda$. Note that unlike empirical spectral measures, these probability measures contain information on the eigenvectors of $W_n$ and $S_n$. Following the well-known observation that outliers have associated eigenvectors which are localized in the direction of the spike, their influence should be present in $\mu_{(W_n,v_1^{(n)})}$ and $\mu_{(S_n,v_1^{(n)})}$ at a macroscopic level. Moreover, they can be easily studied as their Stieltjes transforms are given by the generalized entries of the resolvent ($\langle v_1^{(n)}, (W_n-z)^{-1} v_1^{(n)} \rangle \text{ and }  \langle v_1^{(n)}, (S_n-z)^{-1} v_1^{(n)} \rangle$), for which many results already exist. In particular, as stated in Corollaries \ref{coro: Wigner case, as convergence} and \ref{coro: Wishart general case as convergence}, $\mu_{(W_n,v_1^{(n)})}$ and $\mu_{(S_n,v_1^{(n)})}$ converge weakly towards deterministic probability measures denoted $\mu_{sc,A,\theta}$ and $\mu_{\alpha,\Sigma,\theta}$. We are going to present two applications of the spectral measures.

The first one recovers a classical result concerning the value of an outlier and the norm of its associated eigenvector projection in the direction of the spike.

The second one is concerned with the behavior of the projection of non-outlier eigenvectors in the direction of the spike. Namely, in the setting of an additive perturbation, if $f_{sc,A}$ and $f_{sc,A,\theta}$ are the respective densities of $\mu_{sc} \boxplus \mu_A$ and $\mu_{sc,A,\theta}$ and if $x$ is in the support of $\mu_{sc} \boxplus \mu_A$, we prove the following convergence in probability
\begin{equation}  \frac{n}{ \left| \left\{ \lambda \in \mathrm{Spec}(W_n), \, |\lambda - x| \leq \varepsilon_n  \right\}  \right| } \sum\limits_{ \lambda \in \mathrm{Spec}(W_n), \, |\lambda -x| \leq \varepsilon_n } \left|   \left\langle \phi_\lambda, v_1^{(n)}   \right\rangle \right|^2  { \underset{ n \rightarrow + \infty}{ \longrightarrow} }  \frac{f_{sc,A,\theta}(x)}{f_{sc,A}(x)}, 
\label{eq: intro eq proj non-outlier}
\end{equation}
for any sequence $n^{-1/2} < \! \! < \varepsilon_n < \! \! < 1$. In other words, the left-hand side, which is an average in the vicinity of $x$ of the square-projections of eigenvectors in the direction of the spike, converges towards a deterministic profile. A similar result holds in the covariance setting. These are the content of Theorems \ref{theo: square proj Wigner general case} and \ref{theo: square proj Wishart general case}. Our proof is inspired by the work of Benaych-Georges, Enriquez and Micha\"il \cite{benaych2018eigenvectors} and uses local laws estimates recently obtained by Knowles and Yin in \cite{MR3262497}. When $\theta$ belongs to the support of the asymptotic spectrum of $A_n$ (resp. $\Sigma_n$), Theorems \ref{theo: square proj Wigner general case} and \ref{theo: square proj Wishart general case} are {\it microscopic} confirmations of the results of Allez and Bouchaud \cite{MR3256861} (in the Wigner setting) and Ledoit and Péché \cite{MR2834718} (in the Wishart setting), who derived the asymptotic behavior of the overlaps $| \langle \phi_\lambda, v_\gamma \rangle|^2$ by taking the average over eigenvectors $\phi_\lambda$ associated to eigenvalues of $W_n$ (resp. $S_n$) belonging to a {\it macroscopic} part of $\mathrm{Supp}\left( \mu_{sc} \boxplus \mu_A \right)$ (resp. $\mathrm{Supp}\left(\mu_{\mathrm{MP},\alpha} \boxtimes \mu_\Sigma \right)$) and the average over eigenvectors $v_\gamma$ of $A_n$ (resp. $\Sigma_n$) belonging to a {\it macroscopic} part of the asymptotic spectrum of $A_n$ (resp. $\Sigma_n$). When $\theta$ does not belong to the support of the asymptotic spectrum of $A_n$ (resp. $S_n$), such a macroscopic average is not available whereas the spectral measure approach still works.

Interestingly, in rank one perturbation cases, that is when $A_n$ (resp. $\Sigma_n$) has only one nonzero (resp. non one) eigenvalue $\theta$, all the computations are explicit. This is because $\mu_{sc} \boxplus \mu_A = \mu_{sc}$ and $\mu_{\mathrm{MP},\alpha} \boxtimes \mu_{\Sigma} = \mu_{\mathrm{MP},\alpha}$ are explicit, respectively equal to the semicircle law and to the Marchenko-Pastur law with parameter $\alpha$:
\begin{align*}
\mu_{sc}(\mathrm{dx}) &= \frac{\sqrt{4-x^2}}{2 \pi} \mathbf{1}_{|x| \leq 2} \mathrm{d}x \\
\mu_{\mathrm{MP},\alpha}(\mathrm{d}x) &= \frac{\sqrt{(b-x)(x-a)}}{2 \pi x} \mathbf{1}_{(a,b)}(x) \mathrm{d}x  + \mathbf{1}_{\alpha<1} (1-\alpha) \delta_0(\mathrm{d}x),
\end{align*}
where $a,b = (1 \pm \sqrt{\alpha})^2$. In particular, we obtain the following formulas for the spectral measures in the direction of the spike:
\begin{align*}
\mu_{sc,\theta}(\mathrm{d}x) &= \frac{\sqrt{4-x^2}}{2 \pi (\theta^2 + 1 - \theta x)} \mathbf{1}_{|x| \leq 2} \mathrm{d}x + \mathbf{1}_{|\theta|>1} \left( 1 - \frac{1}{\theta^2}  \right) \delta_{ \theta + \frac{1}{\theta }} (\mathrm{d}x)  \\
\mu_{\mathrm{MP},\alpha,\theta}(\mathrm{d}x) &= \frac{\theta \sqrt{(b-x)(x-a)}}{2 \pi x \left( x(1-\theta) + \theta( \alpha \theta - \alpha + 1) \right)} \mathbf{1}_{(a,b)}(x) \mathrm{d}x + c_{\alpha,\theta}\delta_0(\mathrm{d}x) + d_{\alpha,\theta} \mathbf{1}_{|\theta-1|> \frac{1}{\sqrt{\alpha}} } \delta_{ x_{\alpha,\theta} } (\mathrm{d}x),
\end{align*}
where $c_{\alpha,\theta}$, $d_{\alpha,\theta}$ and $x_{\alpha,\theta}$ are explicit constants, see Propositions \ref{prop: Wigner case explicit comput of limiting measure} and \ref{prop: Wishart case explicit comput of limiting measure}. These two measures belong to the class of the so-called {\it free Meixner} laws which appear in a Gaussian context in the work of Lenczewski \cite{MR3373058}.

Hence, by \eqref{eq: intro eq proj non-outlier}, the limiting profiles for the averaged square projections of non-outlier eigenvectors are also explicit. We give numerical simulations that agree with our predictions, see Subsections \ref{subsec: Wigner explicit} and \ref{subsec: Wishart explicit}. In the covariance setting, Bloemendal, Knowles, Yau and Yin proved that individual square projections of non-outlier eigenvectors that  are associated to eigenvalues in the vicinity of the edge (of $b$) converge towards a chi squared random variable with given variance (see \cite[Theorem 2.20]{MR3449395}). Although it requires an averaging step, our result completes the picture as it is concerned with eigenvectors associated to any fixed location of the bulk of the spectrum. We believe that the convergence still holds for smaller averaging windows and provide numerical simulations supporting this conjecture at the end of Section \ref{sec: proof averaged}.

Let us finally say a few words about previous use of spectral measures in the literature. Benaych-Georges, Enriquez and Micha\"il, in \cite{benaych2018eigenvectors}, obtained informations on the eigenvectors of a diagonal deterministic matrix perturbed by a random symmetric matrix. In a series of works (the most recent being \cite{gamboa2018sum}), Gamboa, Nagel and Rouault studied spectral measures of some classical ensembles of random matrix theory and their connections with sum rules. More related to our setting, in \cite{MR2330979}, Bai, Miao and Pan studied the spectral measure at the first vector of the canonical basis $e_1$ in general covariance cases, in the absence of spike.

We emphasize that our method could apply to other deformed models such as multiplicative perturbation of Wigner matrices or information plus noise matrices, but we chose to restrict our scope so that the present paper remains short and comprehensive.

\paragraph*{Notations and organization of the paper.}
The random matrices that we study are built from an i.i.d. collection of real random variables $(X^{(n)}_{ij})$, $n \geq 1$, $1 \leq i,j \leq n$. Let $X$ be a generic random variable with the same law. We suppose that $\E[X]=0$, $\E[X^2]=1$ and that $X$ has moments of all order. Notice that the complex case could also be treated, replacing each transposed matrix $A^T$ by its transposed-conjugate $A^*$, and making the hypothesis that $\E[|X|^2]=1$.

For a complex $z \in \mathbf{C}$, we will denote $\Re(z)$ and $\Im(z)$ the real part and imaginary part of $z$.

For a probability measure $\nu$, we always denote
\[  s_{\nu}(z) := \int_{\mathbf{R}} \frac{\mathrm{d}\nu(x)}{x-z}  \]
its Stieltjes transform that maps the upper half-plane to itself.

In Subsections \ref{subsec: Wigner general framework} and \ref{subsec: Wishart general framework} we give general results concerning additive perturbation of a Wigner matrix and multiplicative perturbation of a Wishart matrix. Subsections \ref{subsec: Wigner explicit} and \ref{subsec: Wishart explicit} provide explicit computations for rank-one deformation cases. The proofs are done in Sections \ref{sec: proof as convergence} and \ref{sec: proof averaged}.

\section{Additive perturbation of a Wigner matrix}\label{sec: Wigner general}
\subsection{The general framework}\label{subsec: Wigner general framework}
In this section we consider for each $n \geq 1$ the following Wigner matrix:
\[ X_n := \left[
\begin{array}{cccc}
   X^{(n)}_{11} & X^{(n)}_{12} & \cdots & X^{(n)}_{1n} \\
   X^{(n)}_{12} & X^{(n)}_{22} & \cdots & X^{(n)}_{2n} \\
   \vdots       & \vdots       & \ddots & \vdots  \\
   X^{(n)}_{1n} & X^{(n)}_{2n} & \cdots & X^{(n)}_{nn}
\end{array}
\right].  \]
We also consider $A_n$ a deterministic matrix (or random matrix independent of $X_n$) whose eigenvalues are $\gamma^{(n)}_1=\theta, \gamma^{(n)}_2, \ldots, \gamma^{(n)}_n$, with associated eigenvectors $v^{(n)}_1, \ldots, v^{(n)}_n$. We suppose that there exists a probability measure $\mu_{A}$ such that
\begin{equation}  \frac{1}{n}\sum\limits_{i=1}^n \delta_{\gamma^{(n)}_i} \underset{ n \rightarrow + \infty}{ \longrightarrow} \mu_A 
\label{eq: assumption spec perturbation (add) }
\end{equation}
in the sense of weak convergence. Moreover, let us assume that there exists $\Gamma > 0$ such that, for all $n \geq 1$, $\sup_{1 \leq i \leq n} |\gamma_i^{(n)}| \leq \Gamma$, namely that the eigenvalues of the perturbation remain bounded.

We will study following additive perturbation model:
\[  W_n := \frac{1}{\sqrt{n}}X_n + A_n.  \] 

Let $\lambda^{(n)}_1 \geq \cdots \geq \lambda^{(n)}_n$ be the eigenvalues of $W_n$ and $\phi^{(n)}_1, \ldots, \phi^{(n)}_n$ the associated normalized eigenvectors. Under assumption \eqref{eq: assumption spec perturbation (add) }, it is known that the empirical spectral measure $\mu_{W_n}$ converges towards a deterministic probability measure which is the free convolution $\mu_{sc} \boxplus \mu_A$ between the semicircle law $\mu_{sc}(\mathrm{d}x) = (2 \pi)^{-1} \sqrt{4-x^2} \mathbf{1}_{|x| \leq 2} \mathrm{d}x$ and $\mu_A$. Its Stieltjes transform is characterized by
\begin{equation}  s_{\mu_{sc} \boxplus \mu_A}(z) = \int_{\mathbf{R}} \frac{d\mu_A(\lambda)}{\lambda - s_{\mu_{sc} \boxplus \mu_A}(z) - z} . 
\label{eq: free convolution def}
\end{equation}
This has first been shown by Pastur in \cite{MR0475502}. The study of \eqref{eq: free convolution def} provides information on the probability measure $\mu_{sc} \boxplus \mu_A$. In particular, Biane proved that it has a smooth density with respect to the Lebesgue measure, see \cite{MR1488333}.

The parameter $\theta$ is considered as a spike which may create an outlier in the spectrum, that is an eigenvalue that does not lie in $\mathrm{Supp}( \mu_{sc} \boxplus \mu_A)$. Following the heuristic that an outlier in the spectrum creates a localized eigenvector, we study the spectral measure in the direction of the spike:

\[ \mu_{(W_n,v^{(n)}_1)} = \sum\limits_{i=1}^n |  \langle \phi^{(n)}_i, v^{(n)}_1 \rangle |^2 \delta_{ \lambda^{(n)}_i }.   \]

The Stieltjes transform of $\mu_{(W_n,v^{(n)}_1)}$ is given by $ \langle v^{(n)}_1, (W_n-z)^{-1} v^{(n)}_1 \rangle$ which is sometimes called a generalized entry of the resolvent and has already been studied in the literature. The most recent result is the local law recently obtained by Knowles and Yin [16]. It consists in a uniform estimation of $\langle v,(W_n-z)^{-1} w \rangle$ for any vectors $v$ and $w$ and for any complex $z$ in a domain of the upper half plane that is allowed to approach the real axis as n tends to infinity. Since it is one ingredient of the proof of the forthcoming Theorem \ref{theo: square proj Wigner general case}, we provide a precise statement. 

Let us first introduce some notations. For all $n \geq 1$, writing $z=E+i\eta$, we define:
\begin{equation}\label{eq:Definition1}
\begin{cases}
G_n(z) = \left( W_n - z  \right)^{-1}, \\
\Pi_n(z) = \left( A_n - z - s_{ \mu_{sc} \boxplus \mu_{A} } (z)  \right)^{-1}, \\
\psi_n (z) = \sqrt{ \frac{\Im \left( s_{ \mu_{sc} \boxplus \mu_A } (z) \right) }{n \eta} } + \frac{1}{n \eta}.
\end{cases}
\end{equation}
For all $x \in \mathbf{R}$, $c>0$ and $\tau>0$, we also consider:
\begin{equation}\label{eq:Specdomain1}
\mathcal{D}_n^{(\tau)}(x,c) := \left\{ z \in \mathbf{C}, \, x-c \leq E \leq x+c, \, n^{-1 + \tau} \leq \eta \leq \tau^{-1}    \right\}. 
\end{equation}

The local law we will use consists in a uniform control between the generalized entries of $G_n$ and $\Pi_n$ in the spectral domain $\mathcal{D}_n^{(\tau)}(x,c)$ whenever the density of $\mu_{sc} \boxplus \mu_A$ is bounded away from $0$ on the interval $[x-c,x+c]$.

\begin{Theo}{ {\bf \cite[Theorem 12.2]{MR3704770} }}  \label{theo: local law Wigner}
Let $x \in \mathbf{R}$ and $c>0$. Suppose that:
\begin{equation}\label{eq:positivedens1}
\inf\limits_{ t \in [x-c,x+c] } \frac{\mathrm{d}(\mu_{sc} \boxplus \mu_A)(t)}{\mathrm{d}t} > 0. 
\end{equation}
Then, for any $\tau >0$, uniformly in all vectors $v,w$ and uniformly in $z \in \mathcal{D}_n^{(\tau)}(x,c)$, for all $\varepsilon >0$, there exists $D >0$ such that 
\begin{equation}  \label{eq:theolocWign} 
\PP\left( \left| \langle v, G_n(z)w \rangle - \langle v, \Pi_n(z) w \rangle \right| \geq n^\varepsilon       \psi_n (z) |  | v |  | \, |  | w | | \right) \leq \frac{1}{n^D}.  
\end{equation}
\end{Theo}
Theorem \ref{theo: local law Wigner} is a direct consequence of the work of Knowles and Yin \cite{MR3704770}. More precisely, Theorem 12.2 of \cite{MR3704770} provides a local law of the form \eqref{eq:theolocWign} uniformly in a spectral domain $\mathbf{S}_n \subset \mathbf{C}_+$ provided that an entrywise local law (meaning that $v$ and $w$ are vectors of the canonical basis in \eqref{eq:theolocWign}) has been proved in the particular case where $A_n$ is diagonal, uniformly in the same spectral domain $\mathbf{S}_n$. Such a result has indeed been established by Lee, Schnelli, Stetler and Yau in \cite[Theorem 3.3]{MR3502606}.

Let us comment on hypothesis \eqref{eq:positivedens1}. Together with the assumption that the eigenvalues $\gamma_i^{(n)}$'s remain bounded, it implies that there exists a constant $C>0$ such that, uniformly in $z \in \mathcal{D}_n^{(\tau)}(x,c)$, for all $1 \leq i \leq n$,
\begin{equation}\label{eq:stabilityeq1}
\frac{1}{C} \leq \left| \gamma_i^{(n)} - z - s_{\mu_{sc} \boxplus \mu_A}(z)   \right| \leq C.
\end{equation} 
Equation \eqref{eq:stabilityeq1} is sometimes referred to as the {\it stability assumption}. Going back to the proofs of the local laws, it can be checked that, whenever \eqref{eq:stabilityeq1} is satisfied on a spectral domain $\mathbf{S}_n$, then \eqref{eq:theolocWign} can be proved on $\mathbf{S}_n$. In particular, since \eqref{eq:stabilityeq1} can hold without assumption \eqref{eq:positivedens1}, the local law \eqref{eq:theolocWign} is usually proved on larger domains that $\mathcal{D}_n^{(\tau)}(x,c)$. We choose to state it on $\mathcal{D}_n^{(\tau)}(x,c)$ because we only need this weaker version in the proof of Theorem \ref{theo: square proj Wigner general case}.

\bigskip

The non-local counterpart of Theorem \ref{theo: local law Wigner} is the pointwise convergence of $\langle v, (W_n - z)^{-1} w \rangle$ in the domain $\Im(z) >0$. Taking $v=w=v_1^{(n)}$ yields the following Corollary. 

\begin{corollary}\label{coro: Wigner case, as convergence}
The spectral measure $\mu_{(W_n,v^{(n)}_1)}$ converges in probability towards a deterministic probability measure $\mu_{sc,A,\theta}$ whose Stieltjes transform is given by
\begin{equation} 
s_{\mu_{sc,A,\theta}}(z) = \frac{1}{\theta - s_{\mu_{sc} \boxplus \mu_A }(z) - z} . 
\label{eq: Stieltjes transform eq Wigner case}
\end{equation}
\end{corollary}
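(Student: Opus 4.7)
}
The starting point is the identity
\[
s_{\mu_{(W_n,v_1^{(n)})}}(z) \;=\; \bigl\langle v_1^{(n)}, G_n(z) v_1^{(n)} \bigr\rangle,
\]
valid for every $z$ with $\Im(z)>0$, which follows from the spectral decomposition of $W_n$ together with the definition of the spectral measure. Thus the corollary reduces to a pointwise convergence statement for a generalized diagonal entry of the resolvent.

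The plan is to apply Theorem \ref{theo: local law Wigner} with the fixed choice $v=w=v_1^{(n)}$, which is a unit vector. For any fixed $z=E+i\eta$ in the open upper half-plane, $z$ lies in $\mathcal{D}_n^{(\tau)}$ for all sufficiently small $\tau>0$ (as soon as $n$ is large), so the theorem gives
\[
\bigl\langle v_1^{(n)}, G_n(z) v_1^{(n)} \bigr\rangle - \bigl\langle v_1^{(n)}, \Pi_n(z) v_1^{(n)} \bigr\rangle \;\underset{n\to\infty}{\longrightarrow}\; 0
\]
in probability, since $\psi(z)\to 0$ for fixed $\eta>0$. Here the full strength of the local law is not needed, only its non-local counterpart; but using Theorem \ref{theo: local law Wigner} is convenient since it is already stated. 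Next I exploit the key algebraic point: since $v_1^{(n)}$ is an eigenvector of $A_n$ associated with the eigenvalue $\theta$, and since $\Pi_n(z)=(A_n-z-s_{\mu_{sc}\boxplus\mu_A}(z))^{-1}$ is a polynomial (in fact a resolvent) in $A_n$, the vector $v_1^{(n)}$ is also an eigenvector of $\Pi_n(z)$ with eigenvalue $(\theta-z-s_{\mu_{sc}\boxplus\mu_A}(z))^{-1}$. Hence
\[
\bigl\langle v_1^{(n)}, \Pi_n(z) v_1^{(n)} \bigr\rangle \;=\; \frac{1}{\theta - s_{\mu_{sc}\boxplus\mu_A}(z) - z},
\]
which is exactly the candidate limit. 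Combining the two displays yields convergence in probability of the Stieltjes transform of $\mu_{(W_n,v_1^{(n)})}$ at every fixed $z$ in the upper half-plane.

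To conclude weak convergence in probability of the measures, I would first check that the limiting function
\[
s(z) \;:=\; \frac{1}{\theta - s_{\mu_{sc}\boxplus\mu_A}(z) - z}
\]
is the Stieltjes transform of a probability measure $\mu_{sc,A,\theta}$. Since $s_{\mu_{sc}\boxplus\mu_A}$ maps the upper half-plane to itself, so does $z\mapsto -s_{\mu_{sc}\boxplus\mu_A}(z)-z+\theta$; inverting, $s$ again maps the upper half-plane to itself. Moreover, $s_{\mu_{sc}\boxplus\mu_A}(iy)\to 0$ as $y\to\infty$, so $iy\cdot s(iy)\to -1$, which by the standard characterization implies that $s$ is the Stieltjes transform of a probability measure. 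Then, by the usual continuity-of-Stieltjes-transforms argument (pointwise convergence on the upper half-plane of Stieltjes transforms of random sub-probability measures to the Stieltjes transform of a probability measure implies weak convergence in probability), $\mu_{(W_n,v_1^{(n)})}$ converges in probability to $\mu_{sc,A,\theta}$.

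The only mildly delicate point is the last step: passing from pointwise-in-$z$ convergence in probability of $s_{\mu_{(W_n,v_1^{(n)})}}$ to weak convergence in probability of the measures themselves. This is handled by combining almost-sure tightness of the random probability measures $\mu_{(W_n,v_1^{(n)})}$ (they are supported on the spectrum of $W_n$, whose extreme eigenvalues are bounded in probability under our moment assumptions) with pointwise convergence of Stieltjes transforms along a countable dense set of $z$'s, upgraded via a subsequence argument. All other steps are direct applications of Theorem \ref{theo: local law Wigner} and of the eigenvector identity for $\Pi_n(z)$.
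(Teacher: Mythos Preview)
Your proof is correct and follows precisely the approach the paper indicates: the paper does not spell out a proof of this corollary, merely stating that it is the non-local consequence of Theorem~\ref{theo: local law Wigner} (and noting that simpler arguments such as \cite[Proposition~6.2]{MR3090543} would also suffice), and your argument fills in exactly those details, including the key observation that $v_1^{(n)}$ diagonalizes $\Pi_n(z)$. One minor slip to fix: the map $z\mapsto \theta - s_{\mu_{sc}\boxplus\mu_A}(z) - z$ sends the upper half-plane to the \emph{lower} half-plane (both $z$ and $s_{\mu_{sc}\boxplus\mu_A}(z)$ have positive imaginary part), and it is the reciprocal of a lower-half-plane element that lies in the upper half-plane; your conclusion that $s$ is the Stieltjes transform of a probability measure is correct, only the intermediate sentence needs this sign corrected.
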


\begin{remark}\label{remark: AB}
Equation \eqref{eq: Stieltjes transform eq Wigner case} could allow to retrieve the limit of $\frac{1}{n} \E \left[ \mathrm{Tr} \left( (W_n - z)^{-1} g(A_n)   \right) \right]$, obtained in \cite[Lemme 5.1]{MR3256861} by Allez and Bouchaud, for any measurable function $g$. Indeed, this quantity can be rewritten 
\[ \frac{1}{n} \E \left[ \sum_{i,j=1}^n \frac{ | \langle \phi_i^{(n)}, v_j^{(n)} \rangle |^2 }{ \lambda_i^{(n)} - z} g\left( \gamma_j^{(n)}  \right)  \right], \]
which is nothing but the average of the Stieltjes transforms of the pushforward of the spectral measures of $W_n$ by $g$. In particular, it converges to a non-degenerate limit only when the support of $g$ is contained into a {\it macroscopic} part of $\mathrm{Supp}(\mu_A)$, due to the renormalization by $n$. When $g$ is non-null on a {\it microscopic} part of $\mathrm{Supp}(\mu_A)$, the study of the spectral measures allows to obtain the limit of $\E \left[ \mathrm{Tr} \left( (W_n - z)^{-1} g(A_n)   \right) \right]$ whereas $\frac{1}{n} \E \left[ \mathrm{Tr} \left( (W_n - z)^{-1} g(A_n)   \right) \right]$ brings no information as it converges to zero.
\end{remark}

Note that such a macroscopic result can be obtained using more simple arguments than the local law of Theorem \ref{theo: local law Wigner}. See for example \cite[Proposition $6.2$]{MR3090543} in the case where $v$ and $w$ are vectors of the canonical basis.

We provide two applications of the asymptotic behavior of the spectral measure of $W_n$ in the direction of $v_1^{(n)}$. 

The first one is concerned with outliers and the projection in the direction of the spike of their associated eigenvectors and relies on the following observation: unlike the empirical spectral measure which contains information on outliers only at the order $1/n$, the spectral measure in the direction of the spike already contains it at a {\it macroscopic} order. For all $x \in \mathbf{R} \setminus \mathrm{Supp}( \mu_{sc} \boxplus \mu_A)$, let us introduce 
\[ w(x) := x + s_{\mu_{sc,A,\theta}}(x). \]
If there exists $x$ such that $w(x) = \theta$, it is easy to deduce the existence of outliers for $W_n$ as explained in the following Corollary. Although it is already-known in random matrix theory, our approach is new and relatively simple.
\begin{corollary}\label{coro: Wigner case}
Suppose that there exists $x_{\theta} \notin \mathrm{Supp}(\mu_{sc} \boxplus \mu_A)$ such that $w(x_\theta) = \theta$. Then, $x_{\theta}$ is an outlier of $W_n$. More precisely, set $\delta >0$ such that $[x_\theta - \delta, x_\theta + \delta ] \cap \mathrm{Supp}(\mu_{sc} \boxplus \mu_A) = \emptyset$ and define $k_n$ to be the number of eigenvalues of $W_n$ inside $[x_\theta - \delta, x_\theta + \delta ]$. There exists $1 \leq i_n \leq n$ such that these eigenvalues satisfy
\[  x_\theta + \delta \geq \lambda_{i_n + 1}^{(n)} \geq \lambda_{i_n + 2}^{(n)}  \geq  \cdots \geq \lambda_{i_n + k_n}^{(n)} \geq x_\theta - \delta.  \]
Then, $k_n \geq 1$ for $n$ sufficiently large and:
\begin{enumerate}
\item Both $\lambda_{i_n + 1}^{(n)}$ and $\lambda_{i_n + k_n}^{(n)}$ converge in probability towards $x_\theta$;
\item $ \sum\limits_{p=1}^{k_n} | \langle \phi_{i_n + p}^{(n)}, v_1^{(n)} \rangle |^2$ converges in probability towards $\frac{1}{w'(x_\theta)}$.
\end{enumerate}
\end{corollary}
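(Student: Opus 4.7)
The plan is to deduce everything from the atomic part of the limit measure $\mu_{sc,A,\theta}$ identified by Corollary~\ref{coro: Wigner case, as convergence}. Taking $w(z) = z + s_{\mu_{sc}\boxplus\mu_A}(z)$, which is real-analytic on $\mathbb{R}\setminus\mathrm{Supp}(\mu_{sc}\boxplus\mu_A)$, equation~\eqref{eq: Stieltjes transform eq Wigner case} rewrites as $s_{\mu_{sc,A,\theta}}(z) = -1/(w(z)-\theta)$. Since $x_\theta$ lies outside the support and $w(x_\theta) = \theta$, the right-hand side has a simple pole at $x_\theta$, and a one-line Laurent expansion yields $\mu_{sc,A,\theta}(\{x_\theta\}) = 1/w'(x_\theta)$. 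I would shrink $\delta$ if necessary so that $x_\theta$ is the only solution of $w(x) = \theta$ in $[x_\theta-\delta, x_\theta+\delta]$, hence the only atom of $\mu_{sc,A,\theta}$ in this interval, and so that the endpoints $x_\theta \pm \delta$ are $\mu_{sc,A,\theta}$-continuity points.

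Assertion $2$ then falls out directly. By Corollary~\ref{coro: Wigner case, as convergence}, $\mu_{(W_n,v_1^{(n)})} \to \mu_{sc,A,\theta}$ in probability for weak convergence; the standard Portmanteau argument, applied along subsequences, upgrades this to
\[ \sum_{p=1}^{k_n} \bigl|\langle \phi_{i_n+p}^{(n)}, v_1^{(n)}\rangle\bigr|^2 = \mu_{(W_n,v_1^{(n)})}\bigl([x_\theta-\delta, x_\theta+\delta]\bigr) \underset{n\to\infty}{\longrightarrow} \frac{1}{w'(x_\theta)} \]
in probability. Strict positivity of the limit forces $k_n \geq 1$ with probability tending to one. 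For assertion $1$, I would replay the argument with an arbitrary $\varepsilon \in (0, \delta)$ in place of $\delta$: the limiting mass on the shrunken interval is still $1/w'(x_\theta) > 0$, so at least one eigenvalue lies in $[x_\theta-\varepsilon, x_\theta+\varepsilon]$ with high probability, whereas the complementary annulus $[x_\theta-\delta, x_\theta+\delta] \setminus [x_\theta-\varepsilon, x_\theta+\varepsilon]$ receives vanishing mass, pinning both $\lambda_{i_n+1}^{(n)}$ and $\lambda_{i_n+k_n}^{(n)}$ to $x_\theta$.

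The only delicate point, beyond the mechanical use of Corollary~\ref{coro: Wigner case, as convergence}, is to check that $w'(x_\theta) > 0$, so that the pole is genuinely simple and the atomic mass is finite and nonzero. This is immediate from the strict monotonicity of $s_{\mu_{sc}\boxplus\mu_A}$ on each connected component of $\mathbb{R}\setminus\mathrm{Supp}(\mu_{sc}\boxplus\mu_A)$, a classical property of Stieltjes transforms of probability measures, which transfers to $w$. No substantial obstacle beyond this remains, and the whole argument is essentially a careful bookkeeping of weak convergence of random probability measures.
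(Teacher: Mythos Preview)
Your approach coincides with the paper's: compute the atom $\mu_{sc,A,\theta}(\{x_\theta\})=1/w'(x_\theta)$ as the residue of \eqref{eq: Stieltjes transform eq Wigner case} at $x_\theta$, then invoke the weak convergence of Corollary~\ref{coro: Wigner case, as convergence}. The paper's own proof is in fact terser than yours---after the residue it simply writes ``the Corollary is proved''---so your Portmanteau bookkeeping and the monotonicity justification of $w'(x_\theta)>0$ only add detail, not a new idea.

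There is, however, a genuine gap in your argument for assertion~1, and the paper's proof shares it. Vanishing \emph{spectral} mass on the annulus $[x_\theta-\delta,x_\theta+\delta]\setminus[x_\theta-\varepsilon,x_\theta+\varepsilon]$ only says that $\sum_{\lambda_i\in\text{annulus}}|\langle\phi_i^{(n)},v_1^{(n)}\rangle|^2\to 0$; it does not rule out eigenvalues sitting in the annulus whose eigenvectors happen to be (nearly) orthogonal to $v_1^{(n)}$. Such an eigenvalue would be counted in $k_n$ and could be $\lambda_{i_n+1}^{(n)}$ or $\lambda_{i_n+k_n}^{(n)}$, yet contribute nothing to $\mu_{(W_n,v_1^{(n)})}$. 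So the spectral measure alone does not ``pin'' the extreme eigenvalues to $x_\theta$: one needs an external input---for instance the absence of stray eigenvalues of $W_n$ in gaps of $\mathrm{Supp}(\mu_{sc}\boxplus\mu_A)$ away from the predicted outlier locations, as provided by rigidity or by the interlacing-type remarks made right after Corollary~\ref{coro: Wigner case}---to conclude. Your derivation of assertion~2 and of $k_n\geq 1$ is, by contrast, complete; it is only the final clause ``pinning both $\lambda_{i_n+1}^{(n)}$ and $\lambda_{i_n+k_n}^{(n)}$ to $x_\theta$'' that overreaches what weak convergence of the spectral measure delivers.
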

\begin{proof}
Let $x_{\theta} \notin \mathrm{Supp}(\mu_{sc} \boxplus \mu_A)$ be such that $w(x_{\theta})=\theta$. The value of $\mu_{sc,A,\theta}(\{x_\theta \})$ is given by the residue of $s_{\mu_{sc,A,\theta}}$ at $x_\theta$:
\[ (x_{\theta}-z)s_{\mu_{sc,A,\theta}}(z) = \frac{x_{\theta}-z}{w(x_{\theta})-w(z)} \underset{ z \rightarrow x^+ }{\longrightarrow} \frac{1}{w'(x_{\theta})} > 0.  \]
Since $\mu_{(W_n,v_1^{(n)})}$ converges towards $\mu_{sc,A,\theta}$ by Proposition \ref{coro: Wigner case, as convergence}, the Corollary is proved.
\end{proof} 
In particular, when $W_n$ is known to be a rank-one perturbation of a matrix $W_n'$ whose empirical spectral measure converges towards $\mu_{sc} \boxplus \mu_A$ and which contains no outlier, the interlacing property implies that $W_n$ has a unique outlier, namely that $k_n=1$ in Corollary \ref{coro: Wigner case}. Therefore, in that case, the unique outlier converges towards $x_\theta$ and the square projection in the direction of the spike of its associated eigenvector converges towards $1/w'(x_\theta)$.

Before stating our the second result, which represents the main novelty of this paper, and is also an illustration of the use of the spectral measure, we need the following observation:
\begin{prop}\label{prop: density Wigner}
 $\mu_{sc,A,\theta}$ is absolutely continuous with respect to the Lebesgue measure on $\mathrm{Supp}( \mu_{sc} \boxplus \mu_A )$.
\end{prop}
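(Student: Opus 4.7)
The plan is to deduce absolute continuity directly from the Stieltjes inversion formula applied to the explicit identity
\[ s_{\mu_{sc,A,\theta}}(z) \;=\; \frac{1}{\theta - s_{\mu_{sc} \boxplus \mu_A}(z) - z} \]
established in Corollary \ref{coro: Wigner case, as convergence}. Recall that a positive finite measure $\nu$ on $\mathbf{R}$ is absolutely continuous on an open set $U$ whenever its Stieltjes transform $s_\nu$ extends continuously from the upper half plane to $U$, in which case the density equals $\pi^{-1} \Im s_\nu(E+i0^+)$. So it suffices to control the boundary behaviour of the right hand side above on the interior of $\mathrm{Supp}(\mu_{sc} \boxplus \mu_A)$ (the boundary of the support being Lebesgue-null).

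First I would invoke Biane's classical result on free additive convolution with the semicircle law: $\mu_{sc} \boxplus \mu_A$ is absolutely continuous with a continuous density $f_{sc,A}$, and its Stieltjes transform $m(z) := s_{\mu_{sc} \boxplus \mu_A}(z)$ extends continuously to the closed upper half-plane. Moreover $\Im m(E+i0^+) = \pi f_{sc,A}(E)$, which is strictly positive on the interior of the support.

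With this in hand, I would pass to the limit $\eta \to 0^+$ in the identity above along $z = E + i\eta$. The imaginary part of the denominator $\theta - m(z) - z$ equals $-\eta - \Im m(z)$, which tends to $-\Im m(E+i0^+) < 0$ for $E$ in the interior of the support. In particular the denominator stays bounded away from zero, and
\[ s_{\mu_{sc,A,\theta}}(E+i0^+) \;=\; \frac{1}{\theta - m(E+i0^+) - E} \]
exists as a finite complex number with imaginary part $\Im m(E+i0^+)/|\theta - m(E+i0^+) - E|^2$. By Stieltjes inversion, this shows that $\mu_{sc,A,\theta}$ is absolutely continuous on the interior of $\mathrm{Supp}(\mu_{sc} \boxplus \mu_A)$ with density
\[ f_{sc,A,\theta}(E) \;=\; \frac{f_{sc,A}(E)}{\bigl|\theta - m(E+i0^+) - E\bigr|^2}. \]
Adding the Lebesgue-null boundary back in gives absolute continuity on the whole support.

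The only real subtlety is justifying the boundary regularity of $m$, and this is exactly Biane's theorem, so no genuine obstacle arises. As a bonus, the formula obtained for the ratio $f_{sc,A,\theta}/f_{sc,A}$ makes the right-hand side of \eqref{eq: intro eq proj non-outlier} fully explicit in terms of the boundary values of $m$.
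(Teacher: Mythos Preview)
Your proof is correct and follows essentially the same route as the paper: invoke Biane's theorem for the boundary regularity of $s_{\mu_{sc}\boxplus\mu_A}$, plug into the identity \eqref{eq: Stieltjes transform eq Wigner case}, and apply Stieltjes inversion. You supply more detail than the paper does---in particular the explicit check that the denominator $\theta - m(E+i0^+) - E$ has strictly negative imaginary part on the interior of the support, and the resulting closed form for $f_{sc,A,\theta}$---but the underlying argument is identical.
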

\begin{proof}
In \cite{MR1488333}, Biane proved that $\mu_{sc} \boxplus \mu_A$ is absolutely continuous with respect to the Lebesgue measure. Therefore, the inverse formula 
\begin{equation}  \frac{\mathrm{d\mu_{sc,A,\theta}}(x)}{\mathrm{d }x} =  \frac{1}{\pi} \lim\limits_{ t \rightarrow 0^+} \Im(s_{\mu_{sc,A,\theta}}(x+it) )  
\label{eq: Stieltjes inversion Wign}
\end{equation}
and Equation \eqref{eq: Stieltjes transform eq Wigner case} imply that $\mu_{sc,A,\theta}$ is also absolutely continuous with respect to the Lebesgue measure at any $x \in \mathrm{Supp}( \mu_{sc} \boxplus \mu_A )$.
\end{proof} 

We will denote $f_{sc,A}$ and $f_{sc,A,\theta}$ the respective densities of $\mu_{sc} \boxplus \mu_A$ and $\mu_{sc,A,\theta}$ on $\mathrm{Supp}( \mu_{sc} \boxplus \mu_A )$ (these are well-defined quantities by Proposition \ref{prop: density Wigner}). It turns out that the averaged square-projections of the non-outlier eigenvectors associated to eigenvalues in the vicinity of $x \in \mathrm{Supp}( \mu_{sc} \boxplus \mu_A) $ converges towards the ratio of these two densities.
\begin{theorem}\label{theo: square proj Wigner general case}
Let $x \in \mathrm{Supp}(\mu_{sc} \boxplus \mu_A) $ be such that $f_{sc,A}(x) > 0$. Let $\varepsilon_n$ be a sequence that satisfies $n^\delta / \sqrt{n} <\! \!< \varepsilon_n <\! \!< 1$ for some $0<\delta<1/2$. Then, for every $t>0$, if $\mathcal{I}^{(n)}_{\varepsilon_n}(x) = \left\{ 1 \leq i \leq n: \, | \lambda_i^{(n)} - x | \leq \varepsilon_n   \right\}$:
\[  \PP \left( \left|  \frac{n}{|\mathcal{I}^{(n)}_{\varepsilon_n}(x)|} \sum\limits_{i \in \mathcal{I}^{(n)}_{\varepsilon_n}(x)} \! \! \left| \langle \phi^{(n)}_i,v^{(n)}_1 \rangle \right|^2 - \frac{f_{sc,A,\theta}(x)}{f_{sc,A}(x)} \right| > t    \right)  \underset{n \rightarrow + \infty}{ \longrightarrow} 0.  \]
\end{theorem}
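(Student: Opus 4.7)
The starting observation is that the statistic on the left-hand side equals the ratio
\[ R_n \;:=\; \frac{\mu_{(W_n,v_1^{(n)})}(I_n)}{\mu_{W_n}(I_n)}, \qquad I_n:=[x-\varepsilon_n,x+\varepsilon_n], \]
since $|\mathcal{I}_{\varepsilon_n}^{(n)}(x)|/n=\mu_{W_n}(I_n)$. It therefore suffices to prove, in probability,
\[ \frac{\mu_{(W_n,v_1^{(n)})}(I_n)}{2\varepsilon_n}\longrightarrow f_{sc,A,\theta}(x) \qquad\text{and}\qquad \frac{\mu_{W_n}(I_n)}{2\varepsilon_n}\longrightarrow f_{sc,A}(x). \]
The denominator limit follows from the trace analogue of Theorem~\ref{theo: local law Wigner} available in \cite{MR3704770}, and $f_{sc,A}(x)>0$ is assumed; I focus on the numerator, which is the new content.

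\medskip

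\textbf{Step 1: Poisson smoothing.} Fix $\tau\in(0,1/2)$ and set $\eta_n:=n^{-1+\tau}$; the hypothesis $\varepsilon_n\gg n^{-1/2}$ ensures $n^{-1}\ll\eta_n\ll\varepsilon_n$. Let $P_{\eta_n}(y):=\pi^{-1}\eta_n/(y^2+\eta_n^2)$; for any finite measure $\mu$, the convolution $\mu*P_{\eta_n}$ has density $\pi^{-1}\Im s_\mu(\,\cdot+i\eta_n)$. Since $v_1^{(n)}$ is an eigenvector of $A_n$ associated to $\theta$, the definition of $\Pi_n(z)$ yields directly
\[ \langle v_1^{(n)},\Pi_n(z)v_1^{(n)}\rangle = \frac{1}{\theta-z-s_{\mu_{sc}\boxplus\mu_A}(z)} = s_{\mu_{sc,A,\theta}}(z). \]
Applying Theorem~\ref{theo: local law Wigner} with $v=w=v_1^{(n)}$ on a sufficiently fine net of $I_n$ and using the Lipschitz bound $|\partial_E s_{\mu_n}(E+i\eta_n)|\le\eta_n^{-2}$ yields, with probability tending to one,
\[ \sup_{E\in I_n}\bigl|\Im s_{\mu_{(W_n,v_1^{(n)})}}(E+i\eta_n)-\Im s_{\mu_{sc,A,\theta}}(E+i\eta_n)\bigr| \;=\; o(1). \]
Integrating this uniform estimate over $E\in I_n$ and invoking continuity of $f_{sc,A,\theta}$ at $x$ (a consequence of Proposition~\ref{prop: density Wigner} together with the continuous extension of $s_{\mu_{sc}\boxplus\mu_A}$ to the bulk) gives
\[ \frac{1}{2\varepsilon_n}\bigl(\mu_{(W_n,v_1^{(n)})}*P_{\eta_n}\bigr)(I_n) \xrightarrow[n\to\infty]{\PP} f_{sc,A,\theta}(x). \]

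\medskip

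\textbf{Step 2: Passage from smoothed to exact mass.} Writing $\bigl(\mu_{(W_n,v_1^{(n)})}*P_{\eta_n}\bigr)(I_n)=\int\Phi_n\,d\mu_{(W_n,v_1^{(n)})}$ with $\Phi_n:=P_{\eta_n}*\mathbf{1}_{I_n}$, the function $\Phi_n-\mathbf{1}_{I_n}$ is bounded by $1$ on two boundary windows $J_n^\pm:=[x\pm\varepsilon_n-\delta_n,x\pm\varepsilon_n+\delta_n]$ and decays at rate $\eta_n/\mathrm{dist}(\cdot,\partial I_n)$ outside, for any intermediate scale $\eta_n\ll\delta_n\ll\varepsilon_n$. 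On the boundary layers, the elementary inequality $\mathbf{1}_{J_n^\pm}\le 2(P_{\eta_n}*\mathbf{1}_{J_n^\pm})$ combined with a second application of the local-law argument of Step~1 (now on $J_n^\pm$) gives $\mu_{(W_n,v_1^{(n)})}(J_n^\pm)=O(\delta_n)$ with high probability. Off the boundary layer, tiling into windows of width $\delta_n$ and applying the same no-concentration bound on each window bounds the tail contribution by $O(\eta_n|\log\eta_n|)$. Choosing $\delta_n:=\sqrt{\eta_n\varepsilon_n}$ makes every error $o(\varepsilon_n)$ and produces the desired convergence $\mu_{(W_n,v_1^{(n)})}(I_n)/(2\varepsilon_n)\to f_{sc,A,\theta}(x)$ in probability. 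The identical argument with the trace local law in place of Theorem~\ref{theo: local law Wigner} handles the denominator, completing the proof.

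\medskip

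\textbf{Main obstacle.} The delicate point is Step~2: a probabilistic no-concentration statement for $\mu_{(W_n,v_1^{(n)})}$ on windows of width $\delta_n\ll\varepsilon_n$ is needed, and bootstrapping it from the smoothed-mass estimate of Step~1 forces the Poisson scale $\eta_n$ to lie strictly between $1/n$ and $\delta_n$. The hypothesis $\varepsilon_n\gg n^{-1/2}$ is precisely what permits the chain $1/n\ll\eta_n\ll\delta_n\ll\varepsilon_n$; everything else---uniformity of Theorem~\ref{theo: local law Wigner} on $I_n$ via nets, continuity of the limit densities, and conversion of Stieltjes estimates into probabilistic ones---is routine.
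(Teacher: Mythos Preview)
Your proof is correct and reaches the same conclusion, but the route differs from the paper's. Both arguments begin with the ratio identity $R_n=\mu_{(W_n,v_1^{(n)})}(I_n)/\mu_{W_n}(I_n)$ and both use the anisotropic local law of Theorem~\ref{theo: local law Wigner} as the sole random-matrix input; they diverge in how the Stieltjes estimate is converted into an interval-mass estimate. The paper sandwiches $\mathbf{1}_{I_n}$ between smooth test functions $\phi_n^{\pm}$ (differing on layers of width $\omega_n$ with $n^{-1/2}\ll\omega_n\ll\varepsilon_n$) and applies the Helffer--Sj\"ostrand formula to express $\int\phi_n^{\pm}\,d(\mu_{(W_n,v_1^{(n)})}-\mu_{sc,A,\theta})$ as a two-dimensional integral of $\hat s_n(x+iy)$; splitting the $y$-integration at $|y|=\omega_n$ and integrating by parts produces the $O(n^{\varepsilon}/\sqrt{n})$ bound in one systematic pass, exactly as in the Benaych-Georges--Knowles survey~\cite{MR3792624}. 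Your approach instead exploits the identity $(\mu*P_{\eta_n})'=\pi^{-1}\Im s_\mu(\cdot+i\eta_n)$ to control the Poisson-smoothed mass directly, then closes the gap between smoothed and exact mass via a separate boundary-layer/tail argument with a bootstrap on windows of width $\delta_n=\sqrt{\eta_n\varepsilon_n}$. This is more elementary---no quasi-analytic extensions or integration-by-parts---but the unsmoothing step (your Step~2) has to be done by hand, and the tiling/log-sum bound $O(\eta_n|\log\eta_n|)$ for the Cauchy tails, while correct, is exactly the kind of computation the Helffer--Sj\"ostrand machinery is designed to automate. In short: the paper trades one standard complex-analytic tool for a cleaner error term; you trade a bootstrap argument for conceptual transparency. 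Both need $\varepsilon_n\gg n^{-1/2}$ for the same reason, namely that the local-law error $\psi(z)\asymp (n\eta)^{-1/2}$ forces the smoothing scale below $\varepsilon_n$.
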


By taking $g$ the indicator of an interval contained in $\mathrm{Supp}(\mu_A)$ into the statistic introduced in Remark \ref{remark: AB}, Allez and Bouchaud \cite{MR3256861} obtained the asymptotic behavior of the overlaps $|\langle \phi_i^{(n)}, v_j^{(n)} \rangle |^2$ after taking average over eigenvectors $\phi_i^{(n)}$'s (resp. $v_j^{(n)}$'s) with associated eigenvalues $\lambda_i^{(n)}$'s belonging to a {\it macroscopic} proportion of $\mathrm{Supp}(\mu_{sc} \boxplus \mathrm{Supp}(\mu_A))$ (resp. $\mu_A$). When $\theta \in \mathrm{Supp}( \mu_A)$, Theorem \ref{theo: square proj Wigner general case} confirms their result at a {\it microscopic} scale. Indeed, denoting respectively $a$ and $b$ the real and imaginary parts of $\frac{1}{\pi} \lim_{t \rightarrow 0^+}    s_{\mu_{sc} \boxplus \mu_A} (x+it)$, one can rewrite, using the inverse formula \eqref{eq: Stieltjes inversion Wign}: 
\[ \frac{f_{sc,A,\theta}(x)}{f_{sc,A}(x)} = \frac{1}{( \theta -x - a)^2 + b^2}.  \]
When $\theta \notin \mathrm{Supp}(\mu_A)$, the approach of \cite{MR3256861} provides no information on the overlap because it only gives access to $n^{-1}s_{\mu_{(W_n,v_1^{(n)})}}(z)$ which converges to zero, whereas the spectral measure approach still works.

\subsection{The rank-one perturbation}\label{subsec: Wigner explicit}
In the special case where $\gamma^{(n)}_2 = \cdots = \gamma^{(n)}_n = 0$ for all $n\geq 1$, $W_n$ is a rank-one perturbation of a classical Wigner matrix. The limiting spectrum of the perturbation is $\mu_A=\delta_0$ and almost surely, $\mu_{W_n}$ weakly converges towards the semicircle distribution. In this setting, we provide explicit computations. Proposition \ref{coro: Wigner case, as convergence} has now the more explicit formulation:
\begin{prop}\label{prop: Wigner case explicit comput of limiting measure}
In probability, $\mu_{(W_n,v^{(n)}_1)}$ converges towards:
\[ \mu_{sc,\theta}(\mathrm{d}x) := \frac{\sqrt{4-x^2}}{2 \pi (\theta^2 + 1 - \theta x)} \mathbf{1}_{ |x| \leq 2} \mathrm{d}x + \mathbf{1}_{|\theta|>1} \left( 1 - \frac{1}{\theta^2} \right) \delta_{\theta + \frac{1}{\theta}} (\mathrm{d}x).    \]
\end{prop}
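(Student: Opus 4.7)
The plan is to specialize Corollary \ref{coro: Wigner case, as convergence} to $\mu_A = \delta_0$ and then invert the resulting Stieltjes transform explicitly. Since $\mu_{sc}\boxplus\delta_0 = \mu_{sc}$, I would write $s:=s_{\mu_{sc}}(z)$, which satisfies the classical quadratic relation $s^2 + zs + 1 = 0$, equivalently $-z - s = 1/s$. Substituting into \eqref{eq: Stieltjes transform eq Wigner case} yields the very compact form
\[ s_{\mu_{sc,\theta}}(z) \;=\; \frac{1}{\theta - s - z} \;=\; \frac{1}{\theta + 1/s} \;=\; \frac{s_{\mu_{sc}}(z)}{\theta\, s_{\mu_{sc}}(z) + 1}, \]
which is the fundamental object to analyse.

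Next, I would recover the absolutely continuous part via the Stieltjes inversion formula \eqref{eq: Stieltjes inversion Wign}. For $x \in (-2,2)$, write $s_{\mu_{sc}}(x+i0^+) = a+ib$ with $a=-x/2$ and $b=\sqrt{4-x^2}/2$. A direct computation gives $|\theta(a+ib)+1|^2 = (1+\theta a)^2 + \theta^2 b^2 = 1 - \theta x + \theta^2$, and multiplying numerator and denominator by the conjugate shows that $\Im s_{\mu_{sc,\theta}}(x+i0^+) = b/(\theta^2-\theta x +1)$. Dividing by $\pi$ reproduces the claimed density on $[-2,2]$.

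For the atomic part, I would look for real poles of $s_{\mu_{sc,\theta}}$, i.e. real solutions of $\theta\, s_{\mu_{sc}}(z) + 1 = 0$ lying outside $[-2,2]$. Using the monotonic bijections $s_{\mu_{sc}}:(2,+\infty)\to(-1,0)$ and $s_{\mu_{sc}}:(-\infty,-2)\to(0,1)$, the equation $s_{\mu_{sc}}(z)=-1/\theta$ has a solution off the support exactly when $|\theta|>1$; plugging back into $s^2+zs+1=0$ gives $z_\theta = \theta + 1/\theta$. The mass of the atom is minus the residue: differentiating $s^2+zs+1=0$ gives $s_{\mu_{sc}}'(z_\theta) = -s/(2s+z)\big|_{z_\theta} = (1/\theta)/(\theta - 1/\theta) = 1/(\theta^2-1)$, so the residue is $s(z_\theta)/(\theta s'(z_\theta)) = -(\theta^2-1)/\theta^2$, giving an atom of mass $1-1/\theta^2$ at $\theta+1/\theta$.

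There is no real obstacle beyond keeping the branch of $\sqrt{z^2-4}$ straight in the two regimes $z>2$ and $z<-2$; the slightly delicate point is verifying that the pole found on the real line really corresponds to a Dirac mass (and not an artefact of an incorrect branch), which is ensured by the monotonicity argument above and by a total-mass check: one can integrate the proposed continuous density on $[-2,2]$ and verify that, together with $1-1/\theta^2$ when $|\theta|>1$ (resp.\ $0$ otherwise), it sums to $1$, certifying that no further atoms exist.
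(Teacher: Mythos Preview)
Your proof is correct and follows essentially the same strategy as the paper: specialize Corollary~\ref{coro: Wigner case, as convergence} to $\mu_A=\delta_0$, compute $s_{\mu_{sc,\theta}}$ explicitly, and then recover the density via Stieltjes inversion and the atom via a residue computation. The only cosmetic difference is that the paper substitutes the closed form $s_{\mu_{sc}}(z)=\tfrac{1}{2}(-z+\sqrt{z^2-4})$ and rationalizes the denominator, whereas you exploit the functional equation $s^2+zs+1=0$ to obtain the compact expression $s_{\mu_{sc,\theta}}=s/(\theta s+1)$ and then use implicit differentiation for the residue; this packaging lets you avoid tracking the branch of $\sqrt{z^2-4}$ until the very end, but the two arguments are interchangeable.
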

Remark that $W_n$ is a rank-one perturbation of $n^{-1/2} X_n$. Therefore, since $\lambda_1(n^{-1/2}X_n) \rightarrow 2$ and $\lambda_n(n^{-1/2}X_n) \rightarrow -2$ in probability (see \cite{MR637828}), $W_n$ has a single outlier whose location is given by the atom of $\mu_{sc,\theta}$ and whose associated eigenvector has a square projection in the direction of the spike given by the mass of this atom.
\begin{corollary} The following holds:
\begin{enumerate}
\item If $\theta > 1 $, then, in probability, $\lambda_1(W_n)\underset{n \rightarrow + \infty}{\longrightarrow} \theta + \frac{1}{\theta} >\phantom{-} 2$ and $|\langle \phi^{(n)}_1,v^{(n)}_1 \rangle| \underset{n \rightarrow + \infty}{\longrightarrow} \sqrt{ 1 - \frac{1}{\theta^2} }$.
\item If $\theta < -1 $, then, in probability, $\lambda_n(W_n) \underset{n \rightarrow + \infty}{\longrightarrow} \theta + \frac{1}{\theta} < -2$ and $|\langle \phi^{(n)}_n,v^{(n)}_1 \rangle| \underset{n \rightarrow + \infty}{\longrightarrow} \sqrt{ 1 - \frac{1}{\theta^2} } $.
\end{enumerate}
\end{corollary}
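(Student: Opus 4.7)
The plan is to combine three ingredients: the explicit form of $\mu_{sc,\theta}$ given in Proposition \ref{prop: Wigner case explicit comput of limiting measure}, which when $|\theta|>1$ carries an atom at $\theta + 1/\theta$ of mass $1 - 1/\theta^2$; the fact that $W_n$ is a rank-one additive perturbation of $n^{-1/2} X_n$, so by Weyl's interlacing inequality each eigenvalue of $W_n$ is squeezed between two consecutive eigenvalues of $n^{-1/2} X_n$ (with at most one eigenvalue escaping on the appropriate side according to the sign of $\theta$); and the classical Bai--Yin edge result \cite{MR637828}, which yields $\lambda_1(n^{-1/2} X_n) \to 2$ and $\lambda_n(n^{-1/2} X_n) \to -2$ in probability.

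I would treat the case $\theta > 1$ first. Interlacing together with Bai--Yin implies that, for any fixed small $\delta > 0$, with probability tending to one all eigenvalues of $W_n$ lie inside $[-2-\delta, 2+\delta]$ except possibly $\lambda_1(W_n)$, which may exceed $2+\delta$. I then choose $\delta$ small enough that the interval $J := [\theta + 1/\theta - \delta, \theta + 1/\theta + \delta]$ is disjoint from $[-2-\delta, 2+\delta]$. Since, by Proposition \ref{prop: Wigner case explicit comput of limiting measure}, the spectral measure $\mu_{(W_n, v^{(n)}_1)}$ converges weakly in probability to $\mu_{sc,\theta}$, and the endpoints of $J$ are continuity points of $\mu_{sc,\theta}$, one obtains
\[
\sum_{i : \, \lambda^{(n)}_i \in J} \bigl|\langle \phi^{(n)}_i, v^{(n)}_1\rangle\bigr|^2 \;\underset{n \to +\infty}{\longrightarrow}\; \mu_{sc,\theta}(J) \;=\; 1 - \frac{1}{\theta^2}
\]
in probability. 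By the interlacing step, for $n$ large the sum reduces to the single index $i=1$, which simultaneously forces $\lambda_1(W_n) \in J$ and $|\langle \phi^{(n)}_1, v^{(n)}_1\rangle|^2 \to 1 - 1/\theta^2$. Letting $\delta \to 0$ yields $\lambda_1(W_n) \to \theta + 1/\theta$ in probability, and the claim on the projection follows by taking square roots. The case $\theta < -1$ is entirely symmetric, applied now to $\lambda_n(W_n)$ (the potential outlier sits below $-2-\delta$).

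The argument is essentially bookkeeping, and I do not anticipate a serious obstacle. The only point requiring mild care is to extract, from the weak-in-probability convergence of $\mu_{(W_n,v^{(n)}_1)}$ combined with the interlacing-based uniqueness of a potential outlier, the simultaneous pointwise convergence of its location and of its associated mass. This is a standard portmanteau-type statement applied on intervals whose endpoints avoid the atom of $\mu_{sc,\theta}$, and it crucially uses that Proposition \ref{prop: Wigner case explicit comput of limiting measure} has already been established so that no further spectral analysis is required.
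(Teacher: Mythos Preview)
Your proposal is correct and follows essentially the same route as the paper: the paper deduces the corollary from the explicit atom of $\mu_{sc,\theta}$ in Proposition~\ref{prop: Wigner case explicit comput of limiting measure}, together with the interlacing property for rank-one perturbations and the F\"uredi--Koml\'os edge result $\lambda_1(n^{-1/2}X_n)\to 2$, $\lambda_n(n^{-1/2}X_n)\to -2$, exactly as you do. The only cosmetic difference is that the paper packages the atom-to-outlier step inside the general Corollary~\ref{coro: Wigner case} (with $k_n=1$ forced by interlacing), while you invoke the portmanteau argument directly on the interval $J$; both are the same mechanism.
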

Finally, the averaged square-projections have also an explicit form, which is just the inverse of a linear function in that case:
\begin{theorem}\label{theo: averaged explicit Wigner}
Let $x\in (-2,2)$. Let $\varepsilon_n$ be a sequence that satisfies $n^\delta / \sqrt{n} <\! \!< \varepsilon_n < \! \! < 1$ for some $\delta >0$. Then, for every $t>0$,
\[  \PP \left( \left|  \frac{n}{|\mathcal{I}^{(n)}_{\varepsilon_n}(x)|} \sum\limits_{i \in \mathcal{I}^{(n)}_{\varepsilon_n}(x)} \left| \langle \phi^{(n)}_i,v^{(n)}_1 \rangle \right|^2 - \frac{1}{\theta^2-\theta x + 1} \right| > t   \right)  \underset{n \rightarrow + \infty}{ \longrightarrow} 0.  \]
\end{theorem}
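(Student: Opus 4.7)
The plan is to view Theorem \ref{theo: averaged explicit Wigner} as the rank-one specialization of Theorem \ref{theo: square proj Wigner general case}. With $\gamma_2^{(n)} = \cdots = \gamma_n^{(n)} = 0$, the limiting spectral measure of $A_n$ is $\mu_A = \delta_0$, so $\mu_{sc} \boxplus \mu_A = \mu_{sc}$ has density $f_{sc,A}(x) = \frac{\sqrt{4-x^2}}{2\pi}$ on $(-2,2)$, in particular strictly positive on the open interval. All the hypotheses of Theorem \ref{theo: square proj Wigner general case} are therefore satisfied at every $x \in (-2,2)$, and it remains only to identify the ratio $f_{sc,A,\theta}(x)/f_{sc,A}(x)$ explicitly.

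First I would invoke Proposition \ref{prop: Wigner case explicit comput of limiting measure}, which gives the absolutely continuous part of $\mu_{sc,\theta}$ on $(-2,2)$ as $\frac{\sqrt{4-x^2}}{2\pi(\theta^2+1-\theta x)} \mathrm{d}x$; taking the ratio with the semicircle density immediately yields $\frac{1}{\theta^2 - \theta x + 1}$, matching the target limit in the statement of the theorem.

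For a self-contained derivation one may recompute this ratio directly from the Stieltjes transform equation \eqref{eq: Stieltjes transform eq Wigner case}. With $\mu_A = \delta_0$ the free convolution reduces to the semicircle, whose Stieltjes transform $s_{sc}$ satisfies $s_{sc}(z)^2 + z s_{sc}(z) + 1 = 0$; with the branch mapping the upper half-plane to itself this gives $s_{sc}(x+i0^+) = -x/2 + i\sqrt{4-x^2}/2$ for $x \in (-2,2)$. Substituting into the identity $s_{\mu_{sc,\theta}}(z) = (\theta - s_{sc}(z) - z)^{-1}$ of Corollary \ref{coro: Wigner case, as convergence}, rationalizing the denominator, and applying the Stieltjes inversion formula \eqref{eq: Stieltjes inversion Wign} recovers the density $f_{sc,\theta}(x) = \frac{\sqrt{4-x^2}}{2\pi(\theta^2 - \theta x + 1)}$; dividing by $f_{sc}(x)$ gives $1/(\theta^2 - \theta x + 1)$.

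There is no real obstacle here: all the work has been done in Theorem \ref{theo: square proj Wigner general case}, and the rank-one case is distinguished only by the availability of the closed form $\mu_{sc} \boxplus \delta_0 = \mu_{sc}$. The only point to verify carefully is the choice of branch for $\sqrt{z^2-4}$, which is standard.
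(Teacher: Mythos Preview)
Your proposal is correct and matches the paper's approach exactly: Theorem \ref{theo: averaged explicit Wigner} is simply Theorem \ref{theo: square proj Wigner general case} specialized to $\mu_A=\delta_0$, with the explicit density of $\mu_{sc,\theta}$ supplied by Proposition \ref{prop: Wigner case explicit comput of limiting measure} so that the ratio $f_{sc,A,\theta}/f_{sc,A}$ collapses to $1/(\theta^2-\theta x+1)$. The paper gives no separate proof beyond this specialization, and your verification of the density via Stieltjes inversion is precisely the computation carried out in Section \ref{sec: proof as convergence}.
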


\begin{figure}[ht!]
\centering
\begin{subfigure}{.5\textwidth}
  \centering
  \includegraphics[scale=0.4]{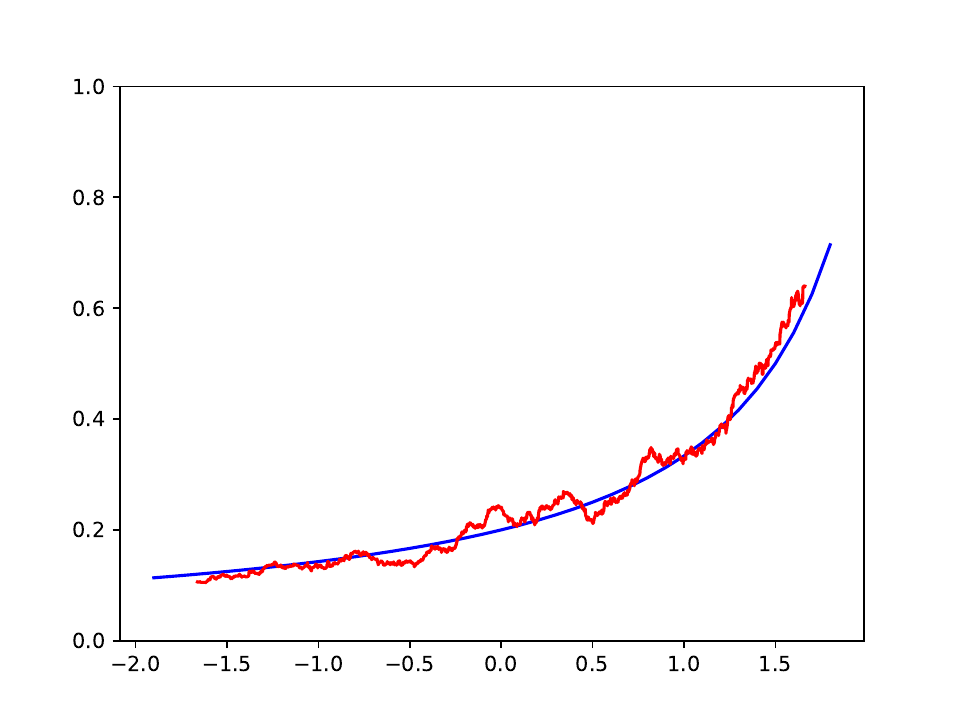}  
  \caption{}
  \label{fig:sfig11 Wigner}
\end{subfigure}%
\begin{subfigure}{.5\textwidth}
  \centering
  \includegraphics[scale=0.4]{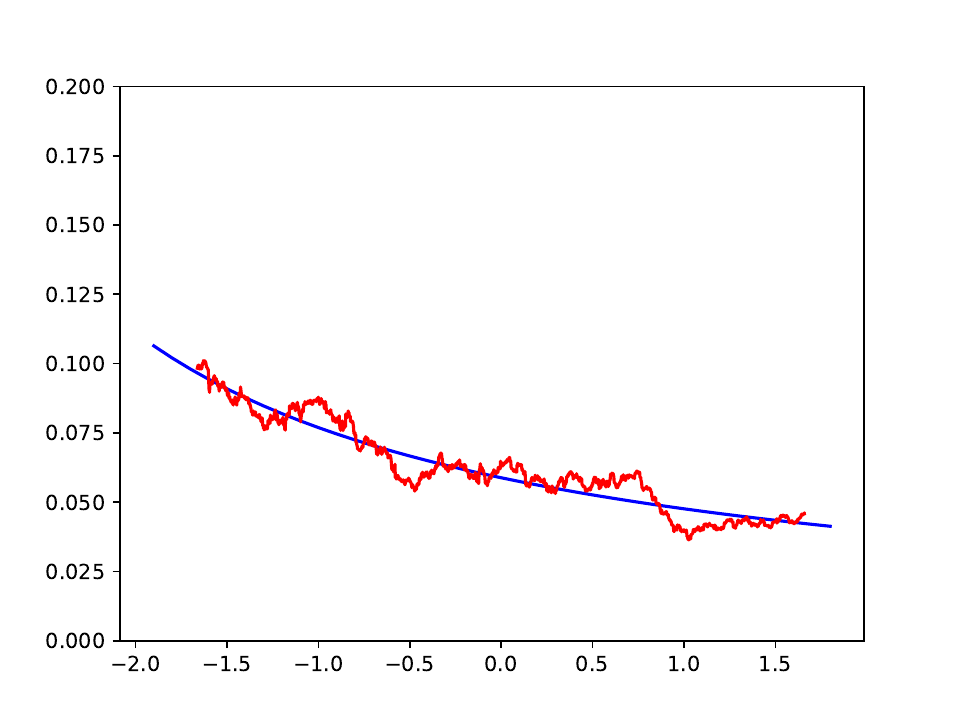} 
  \caption{}
  \label{fig:sfig21 Wigner}
\end{subfigure}
\caption{{\footnotesize In red: simulations of the average squared-projections around all locations $x\in(-2,2)$ where we took average over intervals of typical size $n^{0,1}/\sqrt{n}$ for a single matrix $W_n = n^{-1/2}X_n$, where $X_n$ has gaussian entries and is of size $3000 \times 3000$. In case (a) $\theta=2$ and in case (b) $\theta=-4$. In blue: theoretical predictions.}}
\label{fig: averaged square proj1 Wigner}
\end{figure}

\section{Multiplicative perturbation of a Wishart matrix}\label{sec: Wishart general}
\subsection{The general framework}\label{subsec: Wishart general framework}
Let $m=m(n)$ be a sequence of integers such that $m/n \rightarrow \alpha >0$ as $n \rightarrow + \infty$. For all $n \geq 0$, we consider the following random rectangular matrix:
\[ X_n := \left[
\begin{array}{cccc}
   X^{(n)}_{11} & X^{(n)}_{12} & \cdots & X^{(n)}_{1m} \\
   X^{(n)}_{21} & X^{(n)}_{22} & \cdots & X^{(n)}_{2m} \\
   \vdots       & \vdots       & \ddots & \vdots  \\
   X^{(n)}_{n1} & X^{(n)}_{n2} & \cdots & X^{(n)}_{nm}
\end{array}
\right].  \]
Let also $\Sigma_n$ be a general covariance matrix of size $n \times n$, with eigenvalues given by $\gamma^{(n)}_1 =\theta, \gamma^{(n)}_2, \ldots, \gamma^{(n)}_n$ and associated eigenvectors $v^{(n)}_1, \ldots, v^{(n)}_n$. We suppose that there exists a probability measure $\mu_{\Sigma}$ such that
\[ \frac{1}{n} \sum\limits_{i=1}^n \delta_{\gamma^{(n)}_i} \underset{n \rightarrow + \infty}{ \longrightarrow} \mu_{\Sigma} \] 
in the sense of weak convergence. Moreover, let us assume that there exists $\Gamma>0$ such that for all $ \geq 1$, $\sup_{1 \leq i \leq n} |\gamma_i^{(n)}| \leq \Gamma$.
We study the following multiplicative perturbation model:
\[  S_n := \frac{1}{n} \Sigma_n^{1/2} X_n X^T_n \Sigma^{1/2}_n.   \]
The matrix $S_n$ can be considered as the sampled covariance matrix of $m$ i.i.d. vectors in $\mathbf{R}^n$ having covariance matrix $\Sigma_n$. Let $\lambda^{(n)}_1 \geq \cdots \geq \lambda^{(n)}_n$ be the eigenvalues of $S_n$ and $\phi^{(n)}_1, \ldots, \phi^{(n)}_n$ the associated normalized eigenvectors.

The empirical spectral distribution of $S_n$ converges towards the free product $\mu_{\mathrm{MP},\alpha} \boxtimes \mu_{\Sigma}$ whose Stieltjes transform is characterized by:
\[  s_{\mu_{\mathrm{MP},\alpha} \boxtimes \mu_{\Sigma}}(z)  =  \int_{\mathbf{R}} \frac{\mathrm{d}\mu_{\Sigma}(t)}{t(\alpha-1-z s_{\mu_{\mathrm{MP},\alpha} \boxtimes \mu_{\Sigma}}(z) ) - z} .  \]
This is a consequence of the work of Silverstein \cite{MR1370408}. A later work of Cho\"i and Silverstein \cite{MR1345541} proved that $\mu_{\mathrm{MP},\alpha} \boxtimes \mu_{\Sigma}$ is absolutely continuous with respect to the Lebesgue measure at $x$, for any $x>0$.

The study of $S_n$ is intimately linked to the study of $\underline{S}_n:=\frac{1}{n}X_n^T \Sigma_n X_n$. Indeed, the spectra of $S_n$ and $\underline{S}_n$ only differ by the number of zero eigenvalues. More precisely, denoting $\mu_{S_n}$ and $\mu_{\underline{S}_n}$ the respective empirical spectral measures of $S_n$ and $\underline{S}_n$:
\[ \mu_{\underline{S}_n} = \frac{1}{n}\sum\limits_{ \lambda \in \mathrm{Spec}(S_n) } \delta_\lambda + \frac{m-n}{n}\delta_0. \]
In term of the Stieltjes transforms, this relation translates into $s_{\mu_{\underline{S}_n}}(z) = s_{ \mu_{S_n} }(z) - (\alpha -1)/z$. Therefore, when $n$ tends to infinity, the empirical spectral measure of $\underline{S}_n$ converges towards a deterministic measure whose Stieltjes transform, denoted $\underline{s}$, is given by:
\begin{equation}\label{eq:defnunderline}
\underline{s}(z) = s_{\mu_{\mathrm{MP},\alpha} \boxtimes \mu_{\Sigma}}(z) - \frac{\alpha-1}{z}.
\end{equation}

\begin{remark}{{\bf (Scaling conventions).}}
In the literature of random covariance matrices, many authors are using different scaling than this paper. Namely, they study Wishart matrices of the form $B_N = \frac{1}{N} Y_N Y_N^T$ where $Y_N$ is of size $p \times N$ where $p/N \rightarrow y >0$ as $N$ tends to infinity. In this context, the scaling factor is the dimension of the rows vectors of $Y_N$ whereas in our case, we scale by the dimension of the columns vectors. In order to facilitate the comparison between the two models, let us describe their differences. First, note the following correspondence between the parameters: $n=p$, $m=N$ and $\alpha = 1/y$. Writing $B_N = (p/N) \frac{1}{p}Y_N Y_N^T$, it is easy to see that the empirical spectral measures of the two models satisfy the following equality in law:
\[ \mu_{B_N} = \Lambda_{p/N} \left( \mu_{S_n} \right) = \Lambda_{n/m}\left( \mu_{S_n} \right),  \]
where $\Lambda_\xi (\cdot)$ denotes the pushforward by the dilatation $x \mapsto \xi x$. Hence, $\mu_{B_N}$ converges towards the pushforward of $\mu_{\mathrm{MP},\alpha} \boxtimes \mu_{\Sigma}$ by $\Lambda_{1/\alpha}$. In the particular case where $\Sigma_n$ is the identity, the limiting measure is given by the pushforward of $\mu_{\mathrm{MP},\alpha}$ which, after computations, is given by
\[ \frac{\sqrt{((1+\sqrt{y})^2-x)(x-(1-\sqrt{y})^2)}}{2 \pi y x} \mathbf{1}_{x \in ( (1-\sqrt{y})^2, (1+\sqrt{y})^2 ) } \mathrm{d}x + \left(1 - \frac{1}{y} \right) \delta_0(\mathrm{d}x). \]
\end{remark}

We think of $\theta$ as a spike, that is an atypical eigenvalue compared to the sequence $\gamma^{(n)}_i$, $2 \leq i \leq n$. We are interested in its influence on the apparition of outliers for $S_n$. To that purpose, we study the spectral measure in the direction of the spike:
\[  \mu_{(S_n,v_1^{(n)})} := \sum\limits_{i=1}^n | \langle \phi^{(n)}_i, v^{(n)}_1   \rangle |^2 \delta_{\lambda^{(n)}_i}. \]

The Stieltjes transform of $\mu_{(S_n,v^{(n)}_1)}$ is given by $ \langle v^{(n)}_1, (S_n-z)^{-1} v^{(n)}_1 \rangle$ and has already been studied in the literature. As in the Wigner case, the most recent result is the local law obtained by Knowles and Yin \cite{MR3704770}. It consists in a uniform estimation of $ \langle v , (S_n-z)^{-1} w \rangle$ for any vectors $v$ and $w$ and for any complex $z$ in a domain of the upper half plane that is allowed to approach the real axis as $n$ tends to infinity. Since it is an ingredient of the proof of Theorem \ref{theo: square proj Wishart general case}, we provide a precise statement. 

Let us first introduce some notations. For all $n \geq 1$, writing $z=E+i\eta$, we define:
\begin{equation}\label{eq:Definition2}
\begin{cases}
G_n(z) = \left( S_n - z  \right)^{-1}, \\
\Pi_n(z):= - (z(1 + \underline{s}(z)\Sigma_n ))^{-1}, \\
\psi_n (z) = \sqrt{ \frac{\Im \left( s_{ \mu_{\mathrm{MP},\alpha} \boxtimes \mu_\Sigma } (z) \right) }{n \eta} } + \frac{1}{n \eta},
\end{cases}
\end{equation}
where we recall that $\underline{s}$ is defined in Equation \eqref{eq:defnunderline}. For all $x \in \mathbf{R}$, $c>0$ and $\tau>0$, we also consider:
\begin{equation}\label{eq:Specdomain2}
\mathcal{D}_n^{(\tau)}(x,c) := \left\{ z \in \mathbf{C}, \, x-c \leq E \leq x+c, \, n^{-1 + \tau} \leq \eta \leq \tau^{-1}    \right\}. 
\end{equation}

The local law we will use consists in a uniform control between the generalized entries of $G_n$ and $\Pi_n$ in the spectral domain $\mathcal{D}_n^{(\tau)}(x,c)$ whenever the density of $\mu_{\mathrm{MP},\alpha} \boxtimes \mu_\Sigma$ is bounded away from $0$ on the interval $[x-c,x+c]$.

\begin{Theo}{\bf \cite[Corollary 3.9]{MR3704770}}\label{theo: local law Wishart}
Let $x >0$ and $c>0$. Suppose that:
\begin{equation}\label{eq:positivedens2}
\inf\limits_{ t \in [x-c,x+c] } \frac{\mathrm{d}(\mu_{\mathrm{MP},\alpha} \boxtimes \mu_\Sigma)(t)}{\mathrm{d}t} > 0. 
\end{equation}
Then, for any $\tau >0$, uniformly in all vectors $v,w$ and uniformly for in any $z \in \mathcal{D}_n^{(\tau)}$, for all $\varepsilon >0$, there exists $D >0$ such that 
\begin{equation}   
\PP\left( \left| \langle v, \Sigma_n^{-1/2}  \left(  G_n(z) - \Pi_n(z) \right) \Sigma_n^{-1/2}  w \rangle  \right| \geq n^\varepsilon       \psi_n (z) |  | v |  | \, |  | w | | \right) \leq \frac{1}{n^D}.  
\label{eq: estimate local law Wishart}
\end{equation}
\end{Theo}

Theorem \ref{theo: local law Wishart} is a direct consequence of the work of Knowles and Yin \cite{MR3704770}. More precisely, they first prove \cite[Theorem 3.22]{MR3704770} that \eqref{eq: estimate local law Wishart} holds in the particular case where $v$ and $w$ are vectors of the canonical basis, and when $\Sigma_n$ is diagonal. Then, using a comparison argument, they extend this result to the general setting (see \cite[Theorem 3.21]{MR3704770}).

It is possible to comment hypothesis \eqref{eq:positivedens2} as in the Wigner case (see the paragraphs below Theorem \ref{theo: local law Wigner}). Together with the assumption that the eigenvalues $\gamma_i^{(n)}$'s remain bounded, it implies that there exists a constant $C>0$ such that, uniformly in $z \in \mathcal{D}_n^{(\tau)}(x,c)$, for all $1 \leq i \leq n$,
\begin{equation}\label{eq:stabilityeq2}
\frac{1}{C} \leq \left| 1 +  \gamma_i^{(n)}  \underline{s}(z)   \right| \leq C.
\end{equation} 
Equation \eqref{eq:stabilityeq2} is sometimes referred to as the {\it stability assumption}. Going back to the proofs of the local laws, it can be checked that, whenever \eqref{eq:stabilityeq2} is satisfied on a spectral domain $\mathbf{S}_n$, then \eqref{eq: estimate local law Wishart} can be proved on $\mathbf{S}_n$. In particular, since \eqref{eq:stabilityeq2} can hold without assumption \eqref{eq:positivedens2}, the local law \eqref{eq: estimate local law Wishart} is usually proved on larger domains that $\mathcal{D}_n^{(\tau)}(x,c)$. We choose to state it on $\mathcal{D}_n^{(\tau)}(x,c)$ because we only need this weaker version in the proof of Theorem \ref{theo: square proj Wishart general case}.

\bigskip

The non-local counterpart of Theorem \ref{theo: local law Wigner} is the pointwise convergence of $\langle v, (S_n - z)^{-1} w \rangle$ in the domain $\Im(z) >0$. Taking $v=w=v_1^{(n)}$ yields the following Corollary.

\begin{corollary}\label{coro: Wishart general case as convergence}
In probability, $\mu_{(S_n,v^{(n)}_1)}$ weakly converges towards a probability measure $\mu_{\alpha,\Sigma,\theta}$ whose Stieltjes transform is given by:
\begin{equation} s_{\mu_{\alpha,\Sigma,\theta}}(z) = \frac{1}{\theta(\alpha-1) - z \theta s_{\mu_{\mathrm{MP},\alpha} \boxtimes \mu_{\Sigma}}(z) - z}. 
\label{eq: Stieltjes transform Wishart case}
\end{equation}
\end{corollary}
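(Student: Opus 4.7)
The plan is to follow the blueprint used for the Wigner analogue (Corollary \ref{coro: Wigner case, as convergence}), with Theorem \ref{theo: local law Wishart} now playing the role of Theorem \ref{theo: local law Wigner}. The Stieltjes transform of $\mu_{(S_n,v_1^{(n)})}$ is exactly the generalized resolvent entry
\[ s_{\mu_{(S_n,v_1^{(n)})}}(z) = \langle v_1^{(n)}, G_n(z) v_1^{(n)} \rangle, \]
so the entire task reduces to proving pointwise convergence in probability of this quantity on the upper half-plane and then invoking the Stieltjes continuity theorem to upgrade it to weak convergence of probability measures.

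The first concrete step is to rewrite the generalized entry in a form directly controlled by the local law. Since $v_1^{(n)}$ is an eigenvector of $\Sigma_n$ with eigenvalue $\theta > 0$, we have $\Sigma_n^{\pm 1/2} v_1^{(n)} = \theta^{\pm 1/2} v_1^{(n)}$, so
\[ \langle v_1^{(n)}, G_n(z) v_1^{(n)} \rangle = \theta \cdot \langle v_1^{(n)}, \Sigma_n^{-1/2} G_n(z) \Sigma_n^{-1/2} v_1^{(n)} \rangle. \]
Fixing $z$ with $\Im z > 0$, one has $z \in \mathcal{D}_n^{(\tau)}$ for some $\tau > 0$ and all $n$ sufficiently large, while $\psi(z) = O(n^{-1/2})$. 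Applying \eqref{eq: estimate local law Wishart} with $v = w = v_1^{(n)}$ then shows that the above quantity is, with probability $1-o(1)$, $n^{-1/2+o(1)}$-close to its deterministic equivalent. A direct computation in the eigenbasis of $\Sigma_n$, combined with the Silverstein fixed-point equation satisfied by $m(z) := s_{\mu_{\alpha}\boxtimes \mu_{\Sigma}}(z)$, identifies this deterministic equivalent as
\[ \frac{1}{\theta(\alpha-1) - z \theta m(z) - z}, \]
which is precisely the right-hand side of \eqref{eq: Stieltjes transform Wishart case}.

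It remains to conclude weak convergence in probability. Since the maps $z \mapsto s_{\mu_{(S_n,v_1^{(n)})}}(z)$ are Stieltjes transforms of probability measures, they satisfy $|s_n(z)| \leq 1/\Im z$, and we have just shown pointwise convergence in probability on $\mathbf{C}^+$ to an analytic function whose behaviour at infinity is $-1/z + o(1/z)$. A standard argument (passing to a countable dense subset of $\mathbf{C}^+$, extracting an almost-surely convergent sub-subsequence along an arbitrary subsequence, then applying Vitali's theorem and inverting the Stieltjes transform) produces a probability measure $\mu_{\alpha,\Sigma,\theta}$ with the prescribed Stieltjes transform towards which $\mu_{(S_n,v_1^{(n)})}$ converges weakly in probability. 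The main delicacy I expect is the bookkeeping in the second step: the local law is stated for the conjugated resolvent $\Sigma_n^{-1/2} G_n \Sigma_n^{-1/2}$ rather than $G_n$ itself, and the $\Sigma_n^{\pm 1/2}$ factors must be carried through carefully and unfolded against the spike eigenvector. The fact that the Silverstein equation for $m(z)$ collapses the resulting expression into the clean denominator $\theta(\alpha-1) - z\theta m(z) - z$, rather than leaving behind an integral against $\mu_\Sigma$, is the point where attention is needed.
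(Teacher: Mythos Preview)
Your proposal is correct and follows precisely the route the paper intends. The paper does not spell out a proof of this corollary at all: it simply remarks that ``the non-local counterpart of Theorem~\ref{theo: local law Wishart} is the pointwise convergence of $\langle v,(S_n-z)^{-1}w\rangle$ in the domain $\Im(z)>0$'' and then states the corollary. What you have written is exactly the natural unpacking of that sentence --- identifying the Stieltjes transform of $\mu_{(S_n,v_1^{(n)})}$ as the resolvent entry, applying the local law with $v=w=v_1^{(n)}$, reading off the deterministic equivalent in the $\Sigma_n$-eigenbasis, and then passing from pointwise Stieltjes convergence to weak convergence of measures. Your caution about carrying the $\Sigma_n^{\pm 1/2}$ conjugation through the estimate \eqref{eq: estimate local law Wishart} is well placed, since that is the only genuine difference from the Wigner case; once $v_1^{(n)}$ is an eigenvector of $\Sigma_n$ this unwinds trivially. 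The mention of the Silverstein equation is a slight over-elaboration --- the deterministic equivalent already has the diagonal form $(\gamma(\alpha-1)-z\gamma m(z)-z)^{-1}$ in the $\Sigma_n$-eigenbasis, so evaluating at $\gamma=\theta$ gives \eqref{eq: Stieltjes transform Wishart case} directly without needing the fixed-point relation --- but this does no harm.
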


\begin{remark}\label{remark: LD}
Equation \eqref{eq: Stieltjes transform Wishart case} could allow to retrieve the limit of $\frac{1}{n} \mathrm{Tr} \left( (S_n - z)^{-1} g(\Sigma_n)   \right)$, obtained in \cite[Theorem 2]{MR2834718} by Ledoit and P\'ech\'e, for any measurable function $g$. Indeed, this quantity can be rewritten 
\[ \frac{1}{n}\sum_{i,j=1}^n \frac{ | \langle \phi_i^{(n)}, v_j^{(n)} \rangle |^2 }{ \lambda_i^{(n)} - z} g\left( \gamma_j^{(n)}  \right), \]
which is nothing but the average of the Stieltjes transform of the pushforward of the spectral measures of $S_n$ by $g$. In particular, it converges to a non-degenerate limit only when the support of $g$ is contained into a {\it macroscopic} part of $\mathrm{Supp}(\mu_\Sigma)$, due to the renormalization by $n$. When $g$ is non-null on a {\it microscopic} part of $\mathrm{Supp}(\mu_\Sigma)$, the study of the spectral measures yields the limit of $\mathrm{Tr} \left( (S_n - z)^{-1} g(\Sigma_n)   \right)$ whereas $\frac{1}{n} \mathrm{Tr} \left( (S_n - z)^{-1} g(\Sigma_n)   \right)$ brings no information as it converges to zero.
\end{remark}

Of course, such a macroscopic result can be obtained using more simple arguments than the local law of Theorem \ref{theo: local law Wishart}. We refer for example to \cite[Proposition $6.2$]{MR3090543}.

In what follows, we provide two applications of the asymptotic behavior of the spectral measure of $S_n$ in the direction of $v_1^{(n)}$. 

The first one recovers a classical result on outliers of $S_n$ and the projection in the direction of the spike of their associated eigenvectors. The proof we propose is simple and based on the following observation: unlike the empirical spectral measure which contains information on outliers only at the order $1/n$, the spectral measure in the direction of the spike already contains it at a {\it macroscopic} order. Before stating our result, let us introduce, for all $x>0$ such that $x^{-1} \in \mathbf{R} \setminus \mathrm{Supp}(\mu_{\mathrm{MP},\alpha} \boxtimes \mu_\Sigma)$:
\[ F(x) = \alpha x - x - s_{ \mu_{\mathrm{MP},\alpha} \boxtimes \mu_\Sigma}(1/x). \] 
If there exists $x>0$ such that $1/F(1/x) = \theta$, we easily obtain the existence of an outlier as explained in the following Corollary.
\begin{corollary}\label{coro: BBP Wishart case}
Suppose that there exists $x_{\alpha,\theta} \notin \mathrm{Supp}(\mu_{\mathrm{MP},\alpha} \boxtimes \mu_{\Sigma})$ such that $1/F(1/x_{\alpha,\theta})= \theta$. Then, $x_{\alpha,\theta}$ is an outlier of $S_n$. More precisely, set $\delta >0$ such that $[x_{\alpha,\theta} - \delta, x_{\alpha,\theta} + \delta ] \cap \mathrm{Supp}(\mu_{sc} \boxplus \mu_A) = \emptyset$ and define $k_n$ to be the number of eigenvalues of $S_n$ inside $[x_{\alpha,\theta} - \delta, x_{\alpha,\theta} + \delta ]$. There exists $1 \leq i_n \leq n$ such that these eigenvalues satisfy
\[  x_{\alpha,\theta} + \delta \geq \lambda_{i_n + 1}^{(n)} \geq \lambda_{i_n + 2}^{(n)}  \geq  \cdots \geq \lambda_{i_n + k_n}^{(n)} \geq x_{\alpha,\theta} - \delta.  \]
Then, $k_n \geq 1$ for $n$ sufficiently large and:
\begin{enumerate}
\item Both $\lambda_{i_n + 1}^{(n)}$ and $\lambda_{i_n + k_n}^{(n)}$ converge in probability towards $x_{\alpha,\theta}$;
\item $ \sum\limits_{p=1}^{k_n} | \langle \phi_{i_n + p}^{(n)}, v_1^{(n)} \rangle |^2$ converges in probability towards $\frac{x_{\alpha,\theta}F(1/x_{\alpha,\theta})}{ F'(1/x_{\alpha,\theta})}$.
\end{enumerate}
\end{corollary}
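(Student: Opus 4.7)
The plan is to mirror the proof of Corollary~\ref{coro: Wigner case}: identify $x_{\alpha,\theta}$ as an atom of the limiting measure $\mu_{\alpha,\Sigma,\theta}$ by reading off the residue of its Stieltjes transform there, then transfer that atomic mass to the finite-$n$ spectral measure $\mu_{(S_n, v_1^{(n)})}$ via the weak convergence in probability provided by Corollary~\ref{coro: Wishart general case as convergence}. First I would set $G(z) := \theta(\alpha - 1) - z\theta s_{\mu_{\alpha}\boxtimes\mu_{\Sigma}}(z) - z$, so that \eqref{eq: Stieltjes transform Wishart case} reads $s_{\mu_{\alpha,\Sigma,\theta}}(z) = 1/G(z)$. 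Substituting $x = 1/x_{\alpha,\theta}$ in the definition of $F$ gives
\[
F(1/x_{\alpha,\theta}) \;=\; \frac{\alpha-1}{x_{\alpha,\theta}} - s_{\mu_{\alpha}\boxtimes\mu_{\Sigma}}(x_{\alpha,\theta}),
\]
so the hypothesis $1/F(1/x_{\alpha,\theta}) = \theta$ is equivalent to the vanishing $G(x_{\alpha,\theta}) = 0$, i.e.\ to the scalar relation $s_{\mu_{\alpha}\boxtimes\mu_{\Sigma}}(x_{\alpha,\theta}) + 1/\theta = (\alpha - 1)/x_{\alpha,\theta}$.

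Since $x_{\alpha,\theta}$ lies outside $\mathrm{Supp}(\mu_{\alpha}\boxtimes\mu_{\Sigma})$, the function $G$ is analytic in a neighborhood of $x_{\alpha,\theta}$, and the simple zero yields
\[
\mu_{\alpha,\Sigma,\theta}(\{x_{\alpha,\theta}\}) \;=\; \lim_{z \to x_{\alpha,\theta}^+}(x_{\alpha,\theta}-z)\,s_{\mu_{\alpha,\Sigma,\theta}}(z) \;=\; -\frac{1}{G'(x_{\alpha,\theta})}.
\]
A direct differentiation gives $G'(z) = -\theta\, s_{\mu_{\alpha}\boxtimes\mu_{\Sigma}}(z) - z\theta\, s'_{\mu_{\alpha}\boxtimes\mu_{\Sigma}}(z) - 1$, together with $F'(1/x_{\alpha,\theta}) = (\alpha - 1) + x_{\alpha,\theta}^{2}\, s'_{\mu_{\alpha}\boxtimes\mu_{\Sigma}}(x_{\alpha,\theta})$. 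Combining these two expressions and using the scalar relation just derived from $G(x_{\alpha,\theta}) = 0$ to eliminate $s_{\mu_{\alpha}\boxtimes\mu_{\Sigma}}(x_{\alpha,\theta})$ produces, after cross-multiplication, the desired identity
\[
-\frac{1}{G'(x_{\alpha,\theta})} \;=\; \frac{x_{\alpha,\theta}\, F(1/x_{\alpha,\theta})}{F'(1/x_{\alpha,\theta})}.
\]

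To conclude I would pick $\delta > 0$ so small that $x_{\alpha,\theta}$ is the unique point of $\mathrm{Supp}(\mu_{\alpha,\Sigma,\theta}) \cap [x_{\alpha,\theta}-\delta, x_{\alpha,\theta}+\delta]$, apply the in-probability weak convergence $\mu_{(S_n, v_1^{(n)})} \to \mu_{\alpha,\Sigma,\theta}$ to a continuous bump function equal to $1$ in a neighborhood of $x_{\alpha,\theta}$ and supported in the interval, and read off
\[
\sum_{p=1}^{k_n}\left|\langle\phi^{(n)}_{i_n+p},\,v_1^{(n)}\rangle\right|^{2} \;\underset{n\to+\infty}{\longrightarrow}\; \frac{x_{\alpha,\theta}\, F(1/x_{\alpha,\theta})}{F'(1/x_{\alpha,\theta})}
\]
in probability. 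Strict positivity of this limit forces $k_n \geq 1$ for $n$ large, and repeating the bump-function argument with $\delta$ replaced by an arbitrarily small constant forces both $\lambda^{(n)}_{i_n+1}$ and $\lambda^{(n)}_{i_n+k_n}$ to converge in probability to $x_{\alpha,\theta}$. The step I expect to be most delicate is the purely algebraic one, namely carrying out cleanly the manipulation that rewrites the residue $-1/G'(x_{\alpha,\theta})$ in terms of $F$ and $F'$; every other ingredient is a verbatim analogue of the Wigner argument and relies only on the weak convergence of spectral measures.
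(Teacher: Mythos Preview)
Your proposal is correct and follows the same approach as the paper: compute the residue of $s_{\mu_{\alpha,\Sigma,\theta}}$ at $x_{\alpha,\theta}$ to identify an atom of the limiting measure, then transfer this via the weak convergence of Corollary~\ref{coro: Wishart general case as convergence}. The only difference is in how the residue is evaluated. You introduce $G(z)$, compute $G'$ and $F'$ separately, and then verify algebraically that $-1/G'(x_{\alpha,\theta}) = x_{\alpha,\theta}F(1/x_{\alpha,\theta})/F'(1/x_{\alpha,\theta})$; this is correct (and your worry about the ``delicate'' algebra is unfounded: after substituting $s_{\mu_\alpha\boxtimes\mu_\Sigma}(x_{\alpha,\theta}) = (\alpha-1)/x_{\alpha,\theta} - 1/\theta$ into $-G'$, it collapses in two lines to $(\theta/x_{\alpha,\theta})F'(1/x_{\alpha,\theta})$). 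The paper bypasses this step entirely by first rewriting the Stieltjes transform in terms of $F$: since $\theta(\alpha-1) - z\theta s_{\mu_\alpha\boxtimes\mu_\Sigma}(z) - z = z\theta\bigl(F(1/z) - F(1/x_{\alpha,\theta})\bigr)$, one has
\[
(x_{\alpha,\theta}-z)\,s_{\mu_{\alpha,\Sigma,\theta}}(z) \;=\; \frac{(x_{\alpha,\theta}-z)\,F(1/x_{\alpha,\theta})}{z\bigl(F(1/z)-F(1/x_{\alpha,\theta})\bigr)} \;\underset{z\to x_{\alpha,\theta}}{\longrightarrow}\; \frac{x_{\alpha,\theta}F(1/x_{\alpha,\theta})}{F'(1/x_{\alpha,\theta})},
\]
the last limit being just a difference quotient in the variable $1/z$. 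This is slightly cleaner, but the two computations are equivalent.
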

\begin{proof}
Let $x_{\alpha,\theta}>0$ be such that $x_{\alpha,\theta} \notin \mathrm{Supp}(\mu_{\mathrm{MP},\alpha} \boxtimes \mu_{\Sigma})$ and $1/F(1/x_{\alpha,\theta})=\theta$. The value of $\mu_{\alpha,\Sigma,\theta}( \{x_{\alpha,\theta} \})$ is given by the residue of $s_{\mu_{\alpha,\Sigma,\theta}}$ at $x_{\alpha,\theta}$:
\[ (x_{\alpha,\theta}-z)s_{\mu_{\alpha,\Sigma,\theta}}(z) = \frac{(x_{\alpha,\theta}-z) F(1/x_{\alpha,\theta}) }{z (F(1/z) - F(1/x_{\alpha,\theta})) } \underset{ z \rightarrow x_{\alpha,\theta}^+ }{\longrightarrow} \frac{x_{\alpha,\theta}F(1/x_{\alpha,\theta})}{ F'(1/x_{\alpha,\theta})} > 0.  \]
Since $\mu_{(S_n,v_1^{(n)})}$ converges towards $\mu_{\alpha, \Sigma, \theta}$, the Corollary is easily deduced.
\end{proof} 
A particular case is when $S_n$ is a rank-one perturbation of a matrix $S_n'$ which has no outlier and whose empirical spectral measure converges towards $\mu_{\mathrm{MP},\alpha} \boxtimes \mu_\Sigma$. In that setting, the interlacing property implies that $k_n=1$ for all $n$ sufficiently large in Corollary \ref{coro: BBP Wishart case}, meaning that $S_n$ has only one outlier which converges towards $x_{\alpha,\theta}$ and whose associated eigenvector has a square projection in the direction of the spike which converges towards $\frac{x_{\alpha,\theta}F(1/x_{\alpha,\theta})}{ F'(1/x_{\alpha,\theta})}$.

Before stating our second result, which is concerned with the projection of non-outlier eigenvectors onto the direction of the spike, we need the following Proposition.

\begin{prop}
$\mu_{\alpha,\Sigma,\theta}$ is absolutely continuous with respect to the Lebesgue measure on $\mathrm{Supp}(\mu_{\mathrm{MP},\alpha} \boxtimes \mu_{\Sigma} ) \setminus \{0\}$.
\end{prop}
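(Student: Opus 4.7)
The plan is to adapt the argument of Proposition~\ref{prop: density Wigner} to the multiplicative setting. The starting point is the analogue, for the free multiplicative convolution, of Biane's absolute continuity result: Silverstein and Choi showed that $\mu_{\alpha} \boxtimes \mu_{\Sigma}$ is absolutely continuous with respect to the Lebesgue measure on $\mathbf{R} \setminus \{0\}$, and that its Stieltjes transform $s_{\mu_{\alpha} \boxtimes \mu_{\Sigma}}$ admits a continuous extension from the upper half-plane to $\mathbf{R} \setminus \{0\}$. I would denote $f_{\alpha,\Sigma}$ the resulting density on $\mathbf{R} \setminus \{0\}$, so that the boundary values satisfy
\[ \lim_{t \to 0^+} \Im s_{\mu_{\alpha} \boxtimes \mu_{\Sigma}}(x+it) = \pi f_{\alpha,\Sigma}(x). \]

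Next, I would introduce the denominator appearing in \eqref{eq: Stieltjes transform Wishart case},
\[ D(z) := \theta(\alpha-1) - z\theta\, s_{\mu_{\alpha} \boxtimes \mu_{\Sigma}}(z) - z, \]
so that $s_{\mu_{\alpha,\Sigma,\theta}}(z) = 1/D(z)$. By the Silverstein-Choi result, $D(z)$ extends continuously as $z \to x+i0^+$ for every $x \in \mathbf{R} \setminus \{0\}$, and its imaginary part tends to $-x\theta\pi f_{\alpha,\Sigma}(x)$. For $x \in \mathrm{Supp}(\mu_{\alpha} \boxtimes \mu_{\Sigma}) \setminus \{0\}$, one has $x \neq 0$, $\theta \neq 0$, and $f_{\alpha,\Sigma}(x) > 0$ for Lebesgue-almost every such $x$ (by the definition of support together with absolute continuity), so $D(x) \neq 0$ on a subset of full Lebesgue measure of $\mathrm{Supp}(\mu_{\alpha} \boxtimes \mu_{\Sigma}) \setminus \{0\}$. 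Hence $s_{\mu_{\alpha,\Sigma,\theta}}(x+i0^+) = 1/D(x)$ is finite there.

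To conclude, I would apply the Stieltjes inversion formula (identical in form to \eqref{eq: Stieltjes inversion Wign}): on any compact subset $K$ of $\mathrm{Supp}(\mu_{\alpha} \boxtimes \mu_{\Sigma}) \setminus \{0\}$ on which $D$ stays bounded away from zero, $\Im s_{\mu_{\alpha,\Sigma,\theta}}(\cdot + it)$ is uniformly bounded in $t$; dominated convergence then provides a locally integrable density $\frac{1}{\pi}\,\Im\!\left(1/D(x)\right)$ for $\mu_{\alpha,\Sigma,\theta}$ restricted to $\mathrm{Supp}(\mu_{\alpha} \boxtimes \mu_{\Sigma}) \setminus \{0\}$, which is the desired absolute continuity.

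The only nontrivial point is importing the Silverstein-Choi boundary behaviour cleanly at the level of generality required (arbitrary $\mu_{\Sigma}$ with non-negative support, possibly with atoms); the exclusion of the origin in the statement is exactly what allows one to side-step the potential singular mass at $0$ that arises when $\alpha<1$, which would make the factor $z\, s_{\mu_{\alpha} \boxtimes \mu_{\Sigma}}(z)$ in $D(z)$ non-negligible at $z=0$. Once the boundary extension is granted, the computation is a direct Wishart transposition of Proposition~\ref{prop: density Wigner}.
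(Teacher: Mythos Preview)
Your proposal is correct and follows essentially the same route as the paper: invoke the Silverstein--Choi absolute continuity of $\mu_{\alpha} \boxtimes \mu_{\Sigma}$ away from the origin, combine it with the explicit form \eqref{eq: Stieltjes transform Wishart case} of $s_{\mu_{\alpha,\Sigma,\theta}}$, and apply the Stieltjes inversion formula. You supply more detail than the paper does---in particular the observation that the denominator $D(x)$ has nonzero imaginary part $-x\theta\pi f_{\alpha,\Sigma}(x)$ for almost every $x$ in the support, and the dominated-convergence justification---but the underlying argument is the same.
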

\begin{proof}
Let $x \in \mathrm{Supp}(\mu_{\mathrm{MP},\alpha} \boxtimes \mu_{\Sigma} ) \setminus \{0\}$. The work of Cho\"i and Silverstein \cite{MR1345541} ensures that $\mu_{\mathrm{MP},\alpha} \boxtimes \mu_{\Sigma}$ is absolutely continuous with respect to the Lebesgue measure at $x$. Then, the inverse formula for the Stieltjes transform:
\begin{equation}  \frac{\mathrm{d\mu_{sc,A,\theta}}(x)}{\mathrm{d }x} =  \frac{1}{\pi} \lim\limits_{ t \rightarrow 0^+} \Im(s_{\mu_{sc,A,\theta}}(x+it) ),
\label{eq: Stielt inv Wishart} 
\end{equation} 
combined with Equation \eqref{eq: Stieltjes transform Wishart case}, implies the Proposition.
\end{proof}

Let $f_{\alpha,\Sigma}$ and $f_{\alpha,\Sigma,\theta}$ be the respective densities of $\mu_{\mathrm{MP},\alpha} \boxtimes \mu_\Sigma$ and $\mu_{\alpha, \Sigma, \theta}$ on $\mathrm{Supp}(\mu_{\mathrm{MP},\alpha} \boxtimes \mu_{\Sigma})$. It turns out that the averaged square projections of the non-outlier eigenvectors associated to eigenvalues in the vicinity of $x \in \mathrm{Supp}(\mu_{\mathrm{MP},\alpha} \boxtimes \mu_{\Sigma} ) $ converges towards the ratio of these two densities.

\begin{theorem}\label{theo: square proj Wishart general case}
Let $x\in \mathrm{Supp}(\mu_{\mathrm{MP},\alpha} \boxtimes \mu_{\Sigma} )  \setminus \{0 \} $ be such that $f_{\alpha, \Sigma}(x) > 0$. Let $\varepsilon_n$ be a sequence that satisfies $n^\delta / \sqrt{n} <\! \!< \varepsilon_n <\! \!< 1$ for some $\delta>0$. Then, for every $t>0$, if $\mathcal{I}^{(n)}_{\varepsilon_n}(x) = \left\{ 1 \leq i \leq n: \, | \lambda_i^{(n)} - x | \leq \varepsilon_n   \right\}$:
\[  \PP \left( \left|  \frac{n}{|\mathcal{I}^{(n)}_{\varepsilon_n}(x)|} \sum\limits_{i \in \mathcal{I}^{(n)}_{\varepsilon_n}(x)} \left| \langle \phi^{(n)}_i,v^{(n)}_1 \rangle \right|^2 - \frac{f_{\alpha,\Sigma,\theta}(x)}{f_{\alpha,\Sigma}(x)} \right| > t    \right)  \underset{n \rightarrow + \infty}{ \longrightarrow} 0.  \]
\end{theorem}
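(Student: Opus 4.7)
The plan is to follow the strategy of the Wigner counterpart (Theorem \ref{theo: square proj Wigner general case}), adapting it to the multiplicative setting via the anisotropic local law of Theorem \ref{theo: local law Wishart}. Writing $B_n := [x-\varepsilon_n, x+\varepsilon_n]$, observe that
\[ \frac{n}{|\mathcal{I}^{(n)}_{\varepsilon_n}(x)|} \sum_{i \in \mathcal{I}^{(n)}_{\varepsilon_n}(x)} \bigl|\langle \phi^{(n)}_i, v^{(n)}_1 \rangle\bigr|^2 \;=\; \frac{\mu_{(S_n, v^{(n)}_1)}(B_n)/(2\varepsilon_n)}{\mu_{S_n}(B_n)/(2\varepsilon_n)}, \]
so it suffices to prove that the numerator converges in probability to $f_{\alpha,\Sigma,\theta}(x)$ and the denominator to $f_{\alpha,\Sigma}(x)$. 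The denominator assertion is a standard consequence of the local Marchenko--Pastur law (rigidity of eigenvalues at intermediate scales), so the real work is the numerator.

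For the numerator, I would introduce an auxiliary scale $\eta_n$ with $n^{-1+\tau} \ll \eta_n \ll \varepsilon_n$ and use the Poisson-kernel identity
\[ \int_{B_n}\frac{1}{\pi}\,\Im\, s_{\mu_{(S_n, v^{(n)}_1)}}(y + i\eta_n)\, dy \;=\; \int K_{\eta_n}(t)\, d\mu_{(S_n, v^{(n)}_1)}(t), \]
where $K_{\eta_n}(t) = \tfrac{1}{\pi}[\arctan((x+\varepsilon_n-t)/\eta_n) - \arctan((x-\varepsilon_n-t)/\eta_n)]$ converges to $\mathbf{1}_{B_n}(t)$ in a boundary-layer of width $\eta_n$. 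Provided $\mu_{(S_n, v^{(n)}_1)}$ does not concentrate in that boundary-layer (which one checks a posteriori against the limit $\mu_{\alpha,\Sigma,\theta}$, possibly complemented by a coarser invocation of the local law), one obtains $\mu_{(S_n, v^{(n)}_1)}(B_n) = \int_{B_n}\frac{1}{\pi}\,\Im\, s_{\mu_{(S_n, v^{(n)}_1)}}(y + i\eta_n)\, dy + o(\varepsilon_n)$ with high probability. The same decomposition applies to $\mu_{S_n}(B_n)$ in the denominator.

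Next, Theorem \ref{theo: local law Wishart} applied with $v = w = \Sigma_n^{1/2} v^{(n)}_1 = \sqrt{\theta}\,v^{(n)}_1$ and the identity $\Sigma_n^{-1/2}v = v^{(n)}_1$ yields, after algebraic rearrangement matching the approximant with the right-hand side of \eqref{eq: Stieltjes transform Wishart case}, an estimate of the form
\[ s_{\mu_{(S_n, v^{(n)}_1)}}(y + i\eta_n) \;=\; s_{\mu_{\alpha,\Sigma,\theta}}(y + i\eta_n) \;+\; O\!\bigl(n^\varepsilon\psi(y + i\eta_n)\bigr) \]
with probability at least $1 - n^{-D}$. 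To upgrade this to a uniform control over $y \in B_n$, I would apply it on a deterministic grid of polynomial size and use that $s_{\mu_{(S_n, v^{(n)}_1)}}(\cdot + i\eta_n)$ is Lipschitz in $y$ with constant $\eta_n^{-2}$. Taking imaginary parts, integrating over $B_n$, dividing by $2\varepsilon_n$, and invoking continuity of $f_{\alpha,\Sigma,\theta}$ at $x$ yields the numerator convergence; the identical argument with $s_{\mu_\alpha \boxtimes \mu_\Sigma}$ in place of $s_{\mu_{\alpha,\Sigma,\theta}}$ handles the denominator, and the hypothesis $f_{\alpha,\Sigma}(x)>0$ prevents the denominator from vanishing.

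The main obstacle, exactly as in the Wigner proof, is the control of the error introduced by replacing $\mathbf{1}_{B_n}$ with its Poisson-kernel smoothing: this error involves the mass of $\mu_{(S_n,v^{(n)}_1)}$ deposited in a neighborhood of the endpoints $x \pm \varepsilon_n$ of width $\eta_n$, and bounding it requires either continuity of the limiting density together with rigidity, or an explicit bound obtained by applying the local law at a slightly coarser scale. A secondary but non-trivial step is to translate the particular form of Theorem \ref{theo: local law Wishart} (with its $\Sigma_n^{-1/2}$ conjugation and the approximant $z(1 + s\Sigma_n)$) into a clean approximation of the scalar $\langle v^{(n)}_1, G_n(z) v^{(n)}_1 \rangle$ by $s_{\mu_{\alpha,\Sigma,\theta}}(z)$; this uses $\Sigma_n v^{(n)}_1 = \theta v^{(n)}_1$ together with $\theta > 0$, which is why the case $\theta = 0$ is naturally excluded.
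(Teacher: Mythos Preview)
Your strategy is correct and shares the same skeleton as the paper's proof: both write the averaged square projection as the ratio $\mu_{(S_n,v_1^{(n)})}(B_n)/\mu_{S_n}(B_n)$ and control numerator and denominator separately via the local law of Theorem~\ref{theo: local law Wishart}. The technical route, however, differs. The paper sandwiches $\mathbf{1}_{B_n}$ between two smooth cutoffs $\phi_n^{\pm}$ (supported on slightly smaller/larger intervals, with derivatives of order $\omega_n^{-1}$ for an auxiliary scale $n^{-1/2}\ll\omega_n\ll\varepsilon_n$) and then invokes the Helffer--Sj\"ostrand formula to express $\int\phi_n^{\pm}\,\mathrm{d}\hat\mu_n$ as a two-dimensional integral of $\hat s_n(x+iy)$, which is estimated term-by-term using the local law. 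Your approach instead fixes a height $\eta_n$ and uses the Poisson-kernel identity $\int_{B_n}\pi^{-1}\Im s(y+i\eta_n)\,\mathrm{d}y=\int K_{\eta_n}\,\mathrm{d}\mu$, applying the local law directly along the line $\Im z=\eta_n$. The Helffer--Sj\"ostrand route has the advantage that the smooth cutoffs absorb all boundary effects into the $\omega_n$-scale, so no separate ``mass near the endpoints'' argument is needed; your route is more elementary (no quasi-analytic extension, no integration by parts in the complex plane) but, as you correctly flag, requires an explicit bound on $\mu_{(S_n,v_1^{(n)})}$ in $\eta_n$-neighborhoods of $x\pm\varepsilon_n$, which indeed follows from a second application of the local law at scale $\eta_n$ via the trivial inequality $\mu(I)\le 2|I|\,\Im s_\mu(\text{center}+i|I|)$. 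Both arguments yield the same final error $O(n^\varepsilon/\sqrt{n})$ and the same constraint $\varepsilon_n\gg n^{-1/2}$.
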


As in the Wigner case, Theorem \ref{theo: square proj Wishart general case} can be seen as a generalization of a result of \cite{MR2834718}, where the authors obtained the asymptotic behavior of the overlaps $|\langle \phi_i^{(n)}, v_j^{(n)} \rangle |^2$ after taking average over eigenvectors $\phi_i^{(n)}$'s (resp. $v_j^{(n)}$'s) with associated eigenvalues $\lambda_i^{(n)}$'s belonging to a {\it macroscopic} proportion of $\mathrm{Supp}(\mu_{\mathrm{MP},\alpha} \boxtimes \mu_\Sigma)$ (resp. $\mu_\Sigma$), by taking taking functions of the type $g= \mathbf{1}_{[\gamma,+\infty)}$ in the statistics introduced in Remark \ref{remark: LD}. Indeed, when $\theta \in \mathrm{Supp}( \mu_\Sigma )$, Theorem \ref{theo: square proj Wishart general case} is a {\it microscopic} version of the result of \cite[Theorem 3]{MR2834718}. To obtain their formula it suffices to remark that, if $a$ and $b$ are the real and imaginary parts of $1 - \alpha - zs_{\mu_{\mathrm{MP},\alpha} \boxtimes \mu_\Sigma}(z)$, one can rewrite, using Equation \eqref{eq: Stielt inv Wishart}:
\[ \frac{f_{\alpha,\Sigma,\theta}(x)}{f_{\alpha,\Sigma}(x)} = \frac{x \theta }{(a \theta - x)^2 + \theta^2 b^2},   \]
When $\theta \notin \mathrm{Supp}(\mu_\Sigma)$, the techniques of \cite{MR2834718} provide no information on the overlaps as it only gives access to $n^{-1}s_{\mu_{(S_n,v_1^{(n)})}}(z)$, which converges to zero, whereas the spectral measure approach still works.

\subsection{Rank-one perturbation of the Marchenko-Pastur law}\label{subsec: Wishart explicit}
In the peculiar case where $\gamma^{(n)}_2 = \cdots = \gamma^{(n)}_n = 1$ for all $n\geq 1$, $S_n$ is a rank-one perturbation of a classical Wishart matrix. The limiting spectrum of the perturbation is $\mu_{\Sigma}=\delta_1$ and almost surely, $\mu_{S_n}$ weakly converges towards the Marchenko-Pastur law $\mu_{\mathrm{MP},\alpha}$. All previous results have now a more explicit formulation. First, the limit of the spectral measure in the direction of the spike can be identified.
\begin{prop}\label{prop: Wishart case explicit comput of limiting measure}
In probability, $\mu_{(S_n,v^{(n)}_1)}$ converges towards:
\[ \mu_{\mathrm{MP},\alpha,\theta}(\mathrm{d}x) = \frac{\theta \sqrt{(b-x)(x-a)}}{2 \pi x(x(1-\theta) + \theta (\alpha\theta - \alpha + 1))} \mathbf{1}_{(a,b)}(x)\mathrm{d}x + c_{\alpha}\mathbf{1}_{\alpha<1}\delta_0(\mathrm{d}x) + d_{\alpha,\theta} \mathbf{1}_{ |\theta-1|>\frac{1}{ \sqrt{\alpha} }} \delta_{x_{\alpha,\theta}}(\mathrm{d}x),   \]
where $c_{\alpha} = \frac{1-\alpha}{\alpha(\theta-1)+1}$, $d_{\alpha,\theta} =  \frac{ 1- \frac{1}{\alpha(\theta-1)^2} }{  1 + \frac{1}{\alpha(\theta-1)} }$ and $x_{\alpha,\theta} = \frac{\theta(\alpha \theta - \alpha + 1)}{\theta - 1}$.
\end{prop}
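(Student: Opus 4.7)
The plan is to make Corollary \ref{coro: Wishart general case as convergence} entirely explicit. Since $\mu_\Sigma = \delta_1$, the free multiplicative convolution is trivial: $\mu_\alpha \boxtimes \mu_\Sigma = \mu_\alpha$, and the Silverstein equation with $t=1$ collapses to the quadratic $zs^2 + (z+1-\alpha)s + 1 = 0$ for $s = s_{\mu_\alpha}(z)$. Solving this, with the branch pinned by $s_{\mu_\alpha}(z) \sim -1/z$ at infinity, I will get
\[ s_{\mu_\alpha}(z) = \frac{\alpha-1-z+R(z)}{2z}, \qquad R(z) := \sqrt{(z-a)(z-b)}, \]
where $R$ is the branch on $\mathbf{C} \setminus [a,b]$ satisfying $R(z) \sim z$ as $|z| \to \infty$. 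Plugging into \eqref{eq: Stieltjes transform Wishart case} and reducing to a common denominator will give
\[ s_{\mu_{\alpha,\Sigma,\theta}}(z) = \frac{2}{\theta(\alpha-1) + (\theta-2)z - \theta R(z)}. \]

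I then rationalize by multiplying numerator and denominator by $\theta(\alpha-1)+(\theta-2)z+\theta R(z)$. Using $\theta^2 R(z)^2 = \theta^2[z^2-2(1+\alpha)z+(1-\alpha)^2]$, the denominator collapses to $4z[\theta(\alpha\theta-\alpha+1) - (\theta-1)z]$, producing
\[ s_{\mu_{\alpha,\Sigma,\theta}}(z) = \frac{\theta(\alpha-1)+(\theta-2)z+\theta R(z)}{2z\bigl[\theta(\alpha\theta-\alpha+1)-(\theta-1)z\bigr]}. \]
This form displays the two candidate poles at $z = 0$ and $z = x_{\alpha,\theta} := \theta(\alpha\theta-\alpha+1)/(\theta-1)$, so the three pieces of $\mu_{\alpha,\theta}$ can now be read off separately.

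For the absolutely continuous part I apply \eqref{eq: Stielt inv Wishart}: on $(a,b)$ we have $R(x+i0^+) = i\sqrt{(b-x)(x-a)}$ while the denominator is real, so $\Im s_{\mu_{\alpha,\Sigma,\theta}}(x+i0^+)/\pi$ directly yields the stated density. For the atom at $0$ I will compute $\lim_{z\to 0}(-z)s_{\mu_{\alpha,\Sigma,\theta}}(z)$; the crucial input is the branch value $R(0) = -|1-\alpha|$, obtained by tracking $\arg R(z)$ continuously from $z = \infty$ through the upper half-plane down to the origin. This makes the residue equal $(1-\alpha)/(\alpha(\theta-1)+1)$ when $\alpha < 1$ and $0$ when $\alpha \geq 1$, matching $c_\alpha \mathbf{1}_{\alpha<1}$ in the statement.

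The atom at $x_{\alpha,\theta}$ could be found by squaring and simplifying a polynomial identity, but the cleanest route is to feed the relation $F(1/x_{\alpha,\theta}) = 1/\theta$ from Corollary \ref{coro: BBP Wishart case} back into $s_{\mu_\alpha}(1/x_{\alpha,\theta})$, which unwinds to the shortcut $\theta R(x_{\alpha,\theta}) = \theta(\alpha-1)+(\theta-2)x_{\alpha,\theta}$. The numerator at $z = x_{\alpha,\theta}$ then doubles to $2[\theta(\alpha-1)+(\theta-2)x_{\alpha,\theta}]$, and the residue collapses algebraically to $(\alpha(\theta-1)^2-1)/[(\theta-1)(\alpha(\theta-1)+1)]$, which is precisely $d_{\alpha,\theta}$. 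A final elementary check confirms that $x_{\alpha,\theta} \notin [a,b]$ is equivalent to $|\theta-1|>1/\sqrt{\alpha}$, and that the residue is non-negative on exactly that range. The main subtlety I anticipate is the branch tracking for $R$ near $z=0$, which by itself dictates the presence and the mass of the atom at the origin; everything else is routine algebra, made painless for the outlier residue by the shortcut coming from Corollary \ref{coro: BBP Wishart case}.
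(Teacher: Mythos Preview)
Your approach is essentially the paper's: write $s_{\mu_\alpha}$ explicitly, substitute into \eqref{eq: Stieltjes transform Wishart case}, rationalize, then read off the density via Stieltjes inversion and the two atoms via residues, with the branch of $R$ tracked carefully near $z=0$. All of this matches the paper's argument line for line.

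There is one small logical slip in your treatment of the atom at $x_{\alpha,\theta}$. You invoke the relation $F(1/x_{\alpha,\theta})=1/\theta$ ``from Corollary~\ref{coro: BBP Wishart case}'', but that corollary \emph{assumes} the existence of a point with $1/F(1/x)=\theta$; it does not establish that your candidate $x_{\alpha,\theta}=\theta(\alpha\theta-\alpha+1)/(\theta-1)$ satisfies it. Equivalently, rationalizing the denominator squares away the sign of $R$, and the rationalized pole at $x_{\alpha,\theta}$ is genuine only if $\theta R(x_{\alpha,\theta})=+[\theta(\alpha-1)+(\theta-2)x_{\alpha,\theta}]$ rather than the opposite sign (in which case the numerator vanishes too and there is no atom). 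Checking that the residue you compute is nonnegative is a consistency check, not a proof of the sign. The paper closes this gap directly by the explicit factorizations
\[
x_{\alpha,\theta}-b=\frac{\alpha\bigl((\theta-1)-1/\sqrt{\alpha}\bigr)^2}{\theta-1},\qquad
x_{\alpha,\theta}-a=\frac{\alpha\bigl((\theta-1)+1/\sqrt{\alpha}\bigr)^2}{\theta-1},
\]
from which $\sqrt{(x_{\alpha,\theta}-a)(x_{\alpha,\theta}-b)}=\dfrac{\alpha}{\theta-1}\bigl|(\theta-1)^2-1/\alpha\bigr|$ with the correct branch, and the dichotomy $|\theta-1|\lessgtr 1/\sqrt{\alpha}$ falls out immediately. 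If you insert this computation (or an equivalent direct sign check for $R(x_{\alpha,\theta})$) in place of the appeal to Corollary~\ref{coro: BBP Wishart case}, your argument is complete and coincides with the paper's.
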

A consequence of \cite{MR1617051} is that $n^{-1}X_nX_n^T$ has no outlier. Since $S_n$ is a rank one perturbation of this matrix, the discussion following Corollary \ref{coro: BBP Wishart case} implies that $S_n$ has a single outlier.
\begin{corollary} The following holds:
\begin{enumerate}
\item If $\theta > 1 + 1/\sqrt{\alpha} $, then, in probability, $\lambda_1(S_n)\underset{n \rightarrow + \infty}{\longrightarrow} x_{\alpha,\theta} > b $ and $|\langle \phi^{(n)}_1,v^{(n)}_1 \rangle| \underset{n \rightarrow + \infty}{\longrightarrow} \sqrt{d_{\alpha,\theta}}$.
\item If $\theta < 1 - 1/\sqrt{\alpha} $, then, in probability, $\lambda_n(S_n) \underset{n \rightarrow + \infty}{\longrightarrow} x_{\alpha,\theta} < a$ and $|\langle \phi^{(n)}_n,v^{(n)}_1 \rangle| \underset{n \rightarrow + \infty}{\longrightarrow} \sqrt{d_{\alpha,\theta}} $.
\end{enumerate}
\end{corollary}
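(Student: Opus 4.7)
The plan is to read the two items off Proposition~\ref{prop: Wishart case explicit comput of limiting measure} by pairing the Dirac mass of $\mu_{\alpha,\theta}$ with a unique outlier of $S_n$. When $|\theta-1|>1/\sqrt{\alpha}$, the limiting measure $\mu_{\alpha,\theta}$ carries an atom of weight $d_{\alpha,\theta}$ at $x_{\alpha,\theta}=\theta(\alpha\theta-\alpha+1)/(\theta-1)$, and a direct algebraic check gives $x_{\alpha,\theta}=b$ exactly when $\theta=1+1/\sqrt{\alpha}$ and $x_{\alpha,\theta}=a$ exactly when $\theta=1-1/\sqrt{\alpha}$; a monotonicity argument in $\theta$ then places $x_{\alpha,\theta}$ above $b$ in the first regime and below $a$ (which forces $\alpha>1$) in the second. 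Corollary~\ref{coro: BBP Wishart case} applied at $x_{\alpha,\theta}$ yields, for any fixed small $\delta>0$, at least one eigenvalue of $S_n$ in $[x_{\alpha,\theta}-\delta,x_{\alpha,\theta}+\delta]$ with total squared overlap in the direction of the spike converging in probability to $d_{\alpha,\theta}$.

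Next, I would show that this cluster consists of a single eigenvalue. The difference $S_n-S_n'$ with $S_n':=\frac{1}{n}X_nX_n^T$ is of rank two, so Cauchy interlacing does not apply directly. Instead, I pass to the companion Gram matrix
\[ M_n := \frac{1}{n}X_n^T\Sigma_n X_n = \frac{1}{n}X_n^T X_n + (\theta-1)\,u_n u_n^T, \qquad u_n := \frac{1}{\sqrt{n}} X_n^T v_1^{(n)}, \]
which shares its nonzero spectrum with $S_n$ and is a genuine rank-one additive perturbation of $M_n':=\frac{1}{n}X_n^T X_n$ (whose nonzero spectrum coincides with that of $S_n'$). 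Cauchy interlacing for rank-one perturbations then forces the nonzero eigenvalues of $S_n$ and of $S_n'$ to interlace. Combined with the Bai--Yin theorem \cite{MR1617051}, giving $\lambda_1(S_n')\to b$ almost surely and, when $\alpha\geq 1$, $\lambda_n(S_n')\to a$ almost surely, this implies that for every fixed $\delta>0$ at most one eigenvalue of $S_n$ can escape $[a-\delta,b+\delta]$ for $n$ large enough.

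Putting the two steps together, the unique outlier in the first regime must be $\lambda_1(S_n)$ and in the second regime $\lambda_n(S_n)$, and each converges in probability to $x_{\alpha,\theta}$; the single squared overlap then captures the full mass $d_{\alpha,\theta}$ of the limiting atom, and taking square roots gives the stated convergences of $|\langle\phi_1^{(n)},v_1^{(n)}\rangle|$ and $|\langle\phi_n^{(n)},v_1^{(n)}\rangle|$ to $\sqrt{d_{\alpha,\theta}}$. The only subtle step is the rank-one reduction via $M_n$; once it is in hand, the corollary follows mechanically from Proposition~\ref{prop: Wishart case explicit comput of limiting measure}, Corollary~\ref{coro: BBP Wishart case}, and Bai--Yin.
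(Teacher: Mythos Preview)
Your argument is correct and follows the same route as the paper: read off the atom of $\mu_{\alpha,\theta}$ from Proposition~\ref{prop: Wishart case explicit comput of limiting measure}, invoke Corollary~\ref{coro: BBP Wishart case} for existence of an outlier with the right overlap mass, and use Bai--Silverstein \cite{MR1617051} together with interlacing to force uniqueness. In fact you are more careful than the paper on one point: the paper simply asserts that ``$S_n$ is a rank one perturbation'' of $n^{-1}X_nX_n^T$, whereas you correctly observe that $S_n-S_n'$ has rank two and repair this by passing to the companion Gram matrix $M_n=\frac{1}{n}X_n^T\Sigma_n X_n=\frac{1}{n}X_n^TX_n+(\theta-1)u_nu_n^T$, which shares its nonzero spectrum with $S_n$ and is a genuine rank-one additive perturbation, so that Cauchy interlacing applies cleanly. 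This is the standard device for multiplicative spikes and is exactly what is needed to justify the paper's one-line claim.
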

The ratio of the density of $\mu_{\mathrm{MP},\alpha,\theta}$ and $\mu_{\mathrm{MP},\alpha}$ is explicit and we obtain the following Theorem.
\begin{theorem}\label{theo: averaged explicit Wishart}
Let $x\in (a,b)$. Let $\varepsilon_n$ be a sequence that satisfies $n^\delta / \sqrt{n} <\! \!< \varepsilon_n < \! \! < 1$ for some $\delta >0$. Then, for every $t>0$,
\[  \PP \left( \left|  \frac{n}{|\mathcal{I}^{(n)}_{\varepsilon_n}(x)|} \sum\limits_{i \in \mathcal{I}^{(n)}_{\varepsilon_n}(x)} \left| \langle \phi^{(n)}_i,v^{(n)}_1 \rangle \right|^2 - \frac{\theta}{x(1-\theta) + \theta(\alpha \theta - \alpha +1)} \right| > t    \right)  \underset{n \rightarrow + \infty}{ \longrightarrow} 0.  \]
\end{theorem}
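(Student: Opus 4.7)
The theorem is the explicit rank-one specialization of Theorem \ref{theo: square proj Wishart general case}, so the plan is to reduce it to the general statement and then compute the ratio of densities in closed form. Concretely, in the setting $\gamma_2^{(n)}=\cdots=\gamma_n^{(n)}=1$, we have $\mu_\Sigma=\delta_1$ and $\mu_\alpha\boxtimes\mu_\Sigma=\mu_\alpha$, the Marchenko--Pastur law whose density on the bulk $(a,b)$ is known explicitly. Likewise the limiting spectral measure in the direction of the spike $\mu_{\alpha,\Sigma,\theta}=\mu_{\alpha,\theta}$ has been identified in Proposition \ref{prop: Wishart case explicit comput of limiting measure}. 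Once both densities are available, a direct algebraic simplification yields the deterministic profile appearing on the right-hand side.

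The plan is as follows. First, check that the hypotheses of Theorem \ref{theo: square proj Wishart general case} are satisfied at every $x\in(a,b)$: here $f_{\alpha,\Sigma}(x)=\frac{\sqrt{(b-x)(x-a)}}{2\pi x}>0$ since $x\in(a,b)$ and $x\neq 0$ (we restrict to the open bulk). Invoking the general theorem then gives the convergence in probability of the averaged squared-projection around $x$ to the ratio $f_{\alpha,\Sigma,\theta}(x)/f_{\alpha,\Sigma}(x)$. Second, read off the bulk density from Proposition \ref{prop: Wishart case explicit comput of limiting measure},
\[ f_{\alpha,\Sigma,\theta}(x) = \frac{\theta\sqrt{(b-x)(x-a)}}{2\pi x\bigl(x(1-\theta) + \theta(\alpha\theta-\alpha+1)\bigr)},\qquad x\in(a,b). \]
Third, form the quotient: the square-root factors and the $2\pi x$ factors cancel, leaving
\[ \frac{f_{\alpha,\Sigma,\theta}(x)}{f_{\alpha,\Sigma}(x)} = \frac{\theta}{x(1-\theta) + \theta(\alpha\theta-\alpha+1)}, \]
which is exactly the deterministic limit claimed in the theorem.

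So the genuine content has already been placed elsewhere: the probabilistic input is Theorem \ref{theo: square proj Wishart general case} (whose proof uses the Knowles--Yin local law in Theorem \ref{theo: local law Wishart}), and the analytic input is the explicit computation of $\mu_{\alpha,\theta}$ in Proposition \ref{prop: Wishart case explicit comput of limiting measure} (which is in turn obtained from \eqref{eq: Stieltjes transform Wishart case} together with the explicit Stieltjes transform of the Marchenko--Pastur distribution via the inversion formula \eqref{eq: Stielt inv Wishart}). The remainder is a one-line cancellation. The only mild care I would take is to restrict the statement to $x\in(a,b)$ so that both the positivity assumption $f_{\alpha,\Sigma}(x)>0$ and the non-vanishing of $x$ in Proposition \ref{prop: Wishart case explicit comput of limiting measure} hold, and to note that the potential extra atoms of $\mu_{\alpha,\theta}$ (at $0$ or at $x_{\alpha,\theta}$) lie outside $(a,b)$ and therefore play no role once $\varepsilon_n\ll 1$ and $n$ is large enough.

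There is no real obstacle here: the only step that could look delicate is ensuring that the denominator $x(1-\theta)+\theta(\alpha\theta-\alpha+1)$ is nonzero on $(a,b)$, but this is automatic since $f_{\alpha,\Sigma,\theta}$ is a non-negative density on that interval, so the denominator has constant sign there. Thus the theorem follows by direct application of Theorem \ref{theo: square proj Wishart general case} and Proposition \ref{prop: Wishart case explicit comput of limiting measure}.
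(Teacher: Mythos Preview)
Your proposal is correct and matches the paper's approach: the paper proves the general Theorem \ref{theo: square proj Wishart general case} via the Helffer--Sj\"ostrand argument (the proof in Section \ref{sec: proof averaged} is labelled for Theorem \ref{theo: averaged explicit Wishart} but its content and conclusion are for the general statement), and Theorem \ref{theo: averaged explicit Wishart} is then the rank-one specialization obtained by plugging in the explicit densities from Proposition \ref{prop: Wishart case explicit comput of limiting measure} and the Marchenko--Pastur law. Your computation of the density ratio and the verification of the hypotheses are exactly what is needed.
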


When $x$ tends to $b$, the limiting profile becomes $\theta / (1+ \sqrt{\alpha}(1-\theta)^2)$, in accordance with \cite[Theorem 2.20]{MR3449395} where the authors obtain a convergence of individual square-projections onto the direction of the spike towards chi-squared random variables with expectation $\theta / (1+ \sqrt{\alpha}(1-\theta)^2)$. A natural question would be to study an analog convergence in law in the bulk of the spectrum (for any $x \in (a,b)$). We do not pursue this issue here.

\begin{figure}[ht!]
\centering
\begin{subfigure}{.5\textwidth}
  \centering
  \includegraphics[scale=0.4]{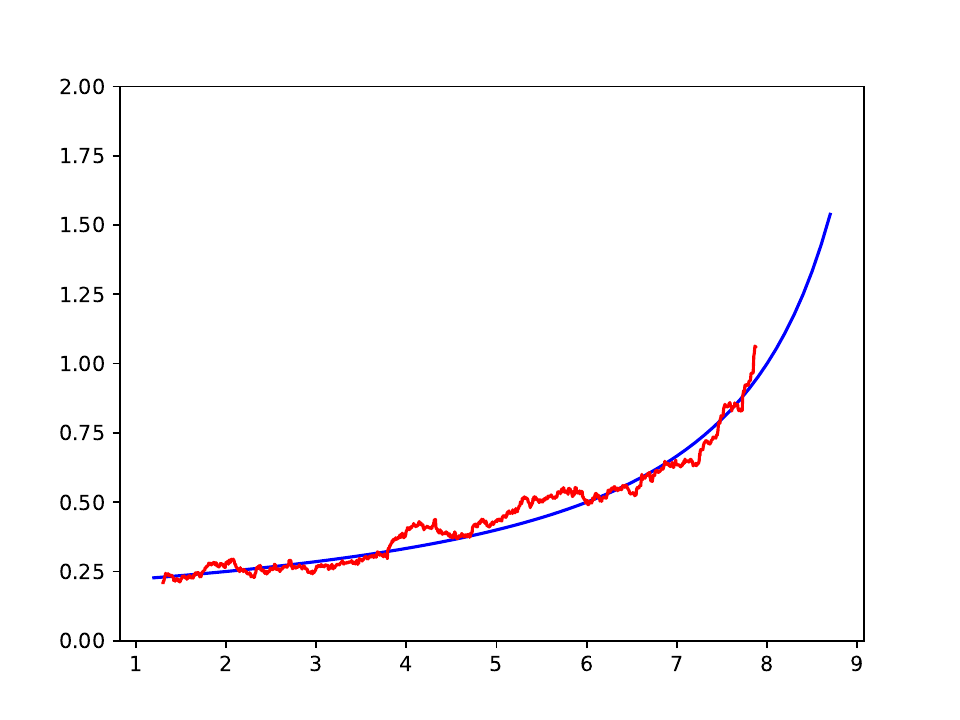} 
  \caption{}
  \label{fig:sfig11}
\end{subfigure}%
\begin{subfigure}{.5\textwidth}
  \centering
  \includegraphics[scale=0.4]{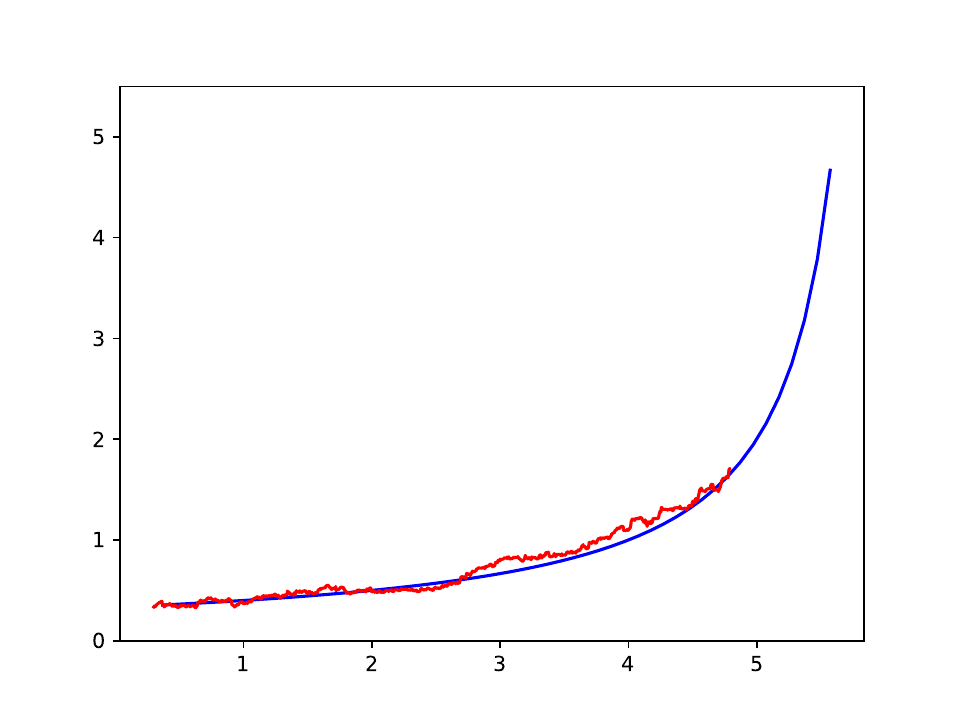} 
  \caption{}
  \label{fig:sfig21}
\end{subfigure}
\caption{{\footnotesize In red: simulations of the average squared projections around all locations $x\in(a,b)$ where we took average over interval of typical size $n^{0,1}/\sqrt{n}$, for a single matrix $S_n = n^{-1} \mathrm{Diag}(\sqrt{\theta},1, \ldots, 1) X_nX_n^T \mathrm{Diag}(\sqrt{\theta},1, \ldots, 1)$ where $X_n$ is gaussian rectangular of size $2000 \times 8000$ ($\alpha=4$) in case (a) and of size $2000 \times 4000$ ($\alpha=2$) in case (b). In each case $\theta = 2$. In blue: theoretical predictions.}}
\label{fig: averaged square proj1}
\end{figure}

\section{Identification of the limiting laws in rank-one perturbation cases}\label{sec: proof as convergence}
In this section we prove Proposition \ref{prop: Wigner case explicit comput of limiting measure} and \ref{prop: Wishart case explicit comput of limiting measure}. We use the following branch of the complex square-root:
\[  \sqrt{z} = \mathrm{sign}\left( \Im (z) \right) \frac{|z| + z}{ \sqrt{2(|z| + \Re (z))}}.   \]

\begin{proof}[Proof of Proposition \ref{prop: Wigner case explicit comput of limiting measure}]
The Stieltjes transform of the semicircle law is given by:
\[ s_{\mu_{sc}}(z) = \frac{-z + \sqrt{z^2-4}}{2} . \]
Therefore, using Equation \eqref{eq: Stieltjes transform eq Wigner case} and the fact that $\mu_A$ is in this case $\delta_0$:
\begin{align*}
s_{\mu_{sc,\theta}}(z) &= \frac{2}{2 \theta - z - \sqrt{z^2 -4}} \\
                       &= \frac{\sqrt{z^2 -4} +2 \theta -  z }{ 2 (\theta^2 - \theta z + 1) }.
\end{align*}
The absolutely continuous part of $\mu_{sc,\theta}$ is given by
\[  \frac{\mathrm{d}\mu_{sc,\theta} (x) }{\mathrm{d}x} = \frac{1}{\pi} \lim\limits_{z \rightarrow x^+} \Im (s_ {\mu_{sc,\theta}}(z)) = \frac{\sqrt{4-x^2}}{2 \pi (\theta^2 + 1 - \theta x)} \mathbf{1}_{ |x| \leq 2} \mathrm{d}x .  \]
The atom at $\theta + 1/\theta$ is given by the corresponding residue of $s_{\mu_{sc,\theta}}$:
\[ - \lim\limits_{z \rightarrow (\theta + 1/\theta)^+ } (z-\theta-1/\theta) s_{ \mu_{sc,\theta}}(z).   \]
By our choice of square-root, $\lim\limits_{z \rightarrow \theta + 1/ \theta } \sqrt{z^2 - 4} = |\theta - 1 |/\theta $ and one easily deduces:
\[ \mu_{sc,\theta}(\{\theta + 1/ \theta\}) =  \left\{ \begin{array}{cr}
0 & \text{if $|\theta| \leq 1$ } \\
1 - \frac{1}{\theta^2} & \text{if $|\theta| > 1$}.
\end{array}   \right.  \]

\end{proof}

\begin{proof}[Proof of Proposition \ref{prop: Wishart case explicit comput of limiting measure}]
Recall the expression of the Stieltjes transform of the Marchenko-Pastur law $\mu_{\mathrm{MP},\alpha} = \mu_{ \alpha,1}$:
\[ s_{\mu_{\mathrm{MP},\alpha}}(z) = \frac{\alpha-z-1 + \sqrt{(z-b)(z-a)}}{2z}.  \]
Substituting in Equation \eqref{eq: Stieltjes transform Wishart case}, we get
\begin{align}
s_{\mu_{\mathrm{MP},\alpha,\theta}}(z) &= \frac{-2}{2z - 2\theta(\alpha-1) + \theta(\alpha-1) - \theta z + \theta \sqrt{(z-b)(z-a)}}         \nonumber         \\
&= \frac{-2\left( 2z - \theta (\alpha - 1) - \theta z - \theta \sqrt{(z-b)(z-a)}   \right)}{\left( (2-\theta)z - \theta(\alpha-1)  \right)^2 - \theta^2(z-b)(z-a)} \nonumber \\
&= \frac{\theta \sqrt{(z-b)(z-a)} + z(\theta-2) +\theta(\alpha-1)}{2z\left( z(1-\theta) + \theta( \alpha \theta - \alpha +1)  \right)} \label{eq: expression Stransform Wishart explicit}.
\end{align}
This expression will allow us to obtain an explicit formula for $\mu_{\mathrm{MP},\alpha,\theta}$, through classical inversion results.

The absolutely continuous part of $\mu_{\mathrm{MP},\alpha,\theta}$ is given by:
\[  \frac{\mathrm{d}\mu_{\mathrm{MP},\alpha,\theta}}{\mathrm{d}x}(x) =  \frac{1}{\pi} \lim\limits_{ z \rightarrow x^+ } \Im(s_{\mu_{\mathrm{MP},\alpha,\theta}}(z)) = \frac{\theta \sqrt{(b-x)(x-a)}}{2 \pi x \left( x(1-\theta) + (\alpha \theta - \alpha + 1)  \right)} \mathbf{1}_{(a,b)}(x) .  \]
The atom of $\mu_{\mathrm{MP},\alpha,\theta}$ at zero is given by:
\[  -\lim\limits_{ \varepsilon \rightarrow 0^+} i\varepsilon s_{\mu_{\mathrm{MP},\alpha,\theta}}(i\varepsilon) = -\lim\limits_{ \varepsilon \rightarrow 0^+}  \frac{\sqrt{(i\varepsilon-b)(i\varepsilon-a)} + (\alpha-1)}{2(\alpha \theta - \alpha +1)}.  \]

By our choice of square-root that preserves the upper-half plane, $\sqrt{(i\varepsilon-b)(i\varepsilon-a)} \rightarrow -|ab| = -|\alpha-1|$ as $\varepsilon \rightarrow 0^+$. Therefore:
\[ \mu_{\mathrm{MP},\alpha,\theta}(\{0\}) = \frac{|\alpha - 1| - (\alpha - 1)   }{2( \alpha \theta - \alpha + 1 )} = \left\{ \begin{array}{cr}
0 & \text{if $\alpha \geq 1$ } \\
\frac{1-\alpha}{\alpha(\theta-1) + 1} & \text{if $\alpha<1$}.
\end{array}   \right. \]
Finally, let us compute the atom at $x_{\theta}=\frac{\theta( \alpha \theta - \alpha + 1)}{\theta - 1}$. It is given by:
\[ -\lim\limits_{ z \rightarrow x_{\theta}^+ } (z-x_{\theta}) s_{\mu_{\mathrm{MP},\alpha,\theta}}(z) = \frac{ \sqrt{ (x_{\theta}-b)(x_{\theta}-a)} + x_{\theta}(\theta-2) + \theta(\alpha-1)}{2(\theta-1)x_{\theta}} . \]
We use the following relations which are easily verified.
\begin{itemize}
\item $x_{\theta} - b = \frac{\alpha \left( (\theta - 1) - \frac{1}{\sqrt{\alpha}}  \right)^2}{\theta - 1}$,
\item $x_{\theta} - a = \frac{\alpha \left( (\theta - 1) + \frac{1}{\sqrt{\alpha}}  \right)^2}{\theta - 1}$.
\end{itemize}
As for the computation of the atom at zero:
\begin{align*} 
\lim\limits_{\varepsilon \rightarrow 0^+} \sqrt{(x_{\theta}+i\varepsilon-b)(x_{\theta}+i\varepsilon-a)} &= \mathrm{sign}(\theta-1) \frac{\alpha}{|\theta-1|} \left| (\theta-1)^2 - \frac{1}{\alpha}  \right|  \\
&= \frac{\alpha}{\theta-1} \left| (\theta-1)^2 - \frac{1}{\alpha}  \right|.
\end{align*}
Besides:
\begin{align*} 
x_{\theta}(\theta-2) + \theta(\alpha-1) &= \frac{\theta(\alpha \theta - \alpha + 1) (\theta - 2) + \theta (\alpha -1)(\theta-1)}{\theta - 1}   \\
                                        &= \frac{\alpha \theta}{\theta-1}\left( (\theta-1)^2 - \frac{1}{\alpha}\right).
\end{align*}
One obtains:
\[  \mu_{\mathrm{MP},\alpha,\theta}( \{ x_{\alpha,\theta} \}) = \alpha \theta \frac{ \left| (\theta-1)^2 - \frac{1}{\alpha} \right| + \left( (\theta-1)^2 - \frac{1}{\alpha}   \right) }{2 \theta (\theta - 1) (\alpha \theta - \alpha + 1)} =  \left\{ \begin{array}{cr}
0 & \text{if $|\theta - 1| \leq 1/\sqrt{\alpha}$ } \\
\frac{\alpha \left( (\theta-1)^2 - \frac{1}{\alpha}  \right)}{(\theta- 1)(\alpha \theta - \alpha + 1) } & \text{if $|\theta - 1| > 1/\sqrt{\alpha}$}.
\end{array}   \right. \]
\end{proof}

\section{Convergence of the averaged square projections}\label{sec: proof averaged}

We only focus on the proof of Theorem \ref{theo: square proj Wishart general case} concerning the convergence of averaged square-projections into the direction of the spike in the Wishart setting. The proof of Theorem \ref{theo: square proj Wigner general case}, which concerns the Wigner setting, would follow the same reasoning, the only difference being the use of the local law of Theorem \ref{theo: local law Wigner} instead of the local law of Theorem \ref{theo: local law Wishart}. For the rest of this section, we fix $0<\delta<1/2$ and $(\varepsilon_n)_{n \geq 1}$ a sequence of real numbers such that $n^\delta / \sqrt{n} < \! \! < \varepsilon_n < \! \! < 1$. We also fix $x_0 >0$ such that $\mu_{\alpha, \Sigma,\theta}$ has a positive density at $x_0$. 

Let us explain the heuristic behind Theorem \ref{theo: square proj Wishart general case}. We will denote $I_{\varepsilon_n}(x_0) := [x_0 -\varepsilon_n, x_0 + \varepsilon_n]$. Recall that $f_{\alpha, \Sigma}$ and $f_{\alpha, \Sigma, \theta}$ are the respective densities of $\mu_{\mathrm{MP},\alpha} \boxtimes \mu_\Sigma$ and $\mu_{\alpha,\Sigma,\theta}$ on $\mathrm{Supp}\left( \mu_{\mathrm{MP},\alpha} \boxtimes \mu_\Sigma  \right)$. Then,
\[  \int_{I_{\varepsilon_n}(x_0)} \mathrm{d}\mu_{(S_n,v^{(n)}_1)}(x) \approx \int_{I_{\varepsilon_n}(x_0)} \mathrm{d}\mu_{\mathrm{MP},\alpha,\theta}(x) + o_1(1) \approx 2 \varepsilon_n f_{\alpha, \Sigma, \theta}(x_0) + o_1(1).   \]
On the other hand, if $\mu_{S_n} = n^{-1} \sum_{1 \leq i \leq n} \delta_{\lambda^{(n)}_i}$ denotes the empirical spectral measure of $S_n$:
\begin{align*}  \int_{I_{\varepsilon_n}(x_0)} \mathrm{d}\mu_{(S_n,v^{(n)}_1)}(x) = \sum\limits_{i \in \mathcal{I}^{(n)}_{\varepsilon_n}(x_0)} \left| \langle \phi_i,e_1 \rangle \right|^2 &= \left( \frac{n}{|\mathcal{I}^{(n)}_{\varepsilon_n}(x_0)|} \sum\limits_{i \in \mathcal{I}^{(n)}_{\varepsilon_n}(x_0)} \left| \langle \phi^{(n)}_i,v^{(n)}_1 \rangle \right|^2 \right) \times \int_{I_{\varepsilon_n}(x_0)} \mathrm{d}\mu_{S_n}(x) \\ 
                                                               &\approx \left( \frac{n}{|\mathcal{I}^{(n)}_{\varepsilon_n}(x_0)|} \sum\limits_{i \in \mathcal{I}^{(n)}_{\varepsilon_n}(x_0)} \left| \langle \phi_i,e_1 \rangle \right|^2 \right) \times \left( 2 \varepsilon_n f_{\alpha, \Sigma}(x_0) + o_2(1) \right),
\end{align*}
where we recall that $\mathcal{I}^{(n)}_{\varepsilon_n}(x_0) = \{ 1 \leq i \leq n: \, | \lambda^{(n)}_i -x_0 | \leq \varepsilon_n \}$. Theorem \ref{theo: square proj Wishart general case} would be proved if the errors $o_1(1)$ and $o_2(1)$ were explicit and of a smaller order than $\varepsilon_n$. The understanding of these errors is precisely the purpose of the so-called local laws that have been recently developed in random matrix theory. In the Wishart setting, it is given by the local law of Knowles and Yin stated in Theorem \ref{theo: local law Wishart}. 

The rest of this section makes the above heuristic rigorous. It combines a local law on the Stieltjes transform of $\mu_{(S_n,v_1^{(n)})}$ together with an approximation argument which allows to estimate a term of the form $\mu_{(S_n,v_1^{(n)})}(I_{\varepsilon_n}(x_0))$. The idea of the latter is to bound the indicator of $I_{\varepsilon_n}(x_0)$ by two smooth analytic functions which we now introduce. Let $(\omega_n)_{n\geq 1}$ be a sequence of real numbers such that 
\begin{equation}\label{eq:omega}
n^\delta / \sqrt{n} < \! \! < \omega_n < \! \! < \varepsilon_n.
\end{equation}
Let $\Psi$ be a smooth decreasing function such that $\Psi(x) \equiv 1$ on $(-\infty,0]$ and $\Psi(x) \equiv 0$ on $[1,+\infty)$. For all $n \geq 1$, we define:
\begin{equation}\label{eq:RegFunctions}
\begin{cases}
\phi_n^{-}(x) = \Psi \left( 1 +  \frac{x - x_0 - \varepsilon_n + \omega_n}{\omega_n}   \right) \Psi \left(  1 - \frac{x-x_0 + \varepsilon_n - \omega_n}{\omega_n} \right),\\
\phi_n^{+}(x) = \Psi \left(  \frac{x - x_0 - \varepsilon_n - \omega_n}{\omega_n}   \right) \Psi \left( - \frac{x-x_0 + \varepsilon_n + \omega_n}{\omega_n} \right).
\end{cases}
\end{equation}
With these definitions, it is easy to check the following properties.
\begin{enumerate}
\item the support of $\phi^{-}_n$ (resp. $\phi^+_n$) is included in $[x_0-\varepsilon_n+ \omega_n,x_0+\varepsilon- \omega_n]$ (resp. $[x_0 -\varepsilon_n- 2\omega_n,x_0 +\varepsilon_n+ 2\omega_n]$);
\item $\phi^-_n$ (resp. $\phi^+_n$) is constant equal to $1$ on $[x_0 -\varepsilon_n + 2 \omega_n, x_0 + \varepsilon_n - 2 \omega_n]$ (resp. $[x_0 -\varepsilon_n-\omega_n,x_0 +\varepsilon_n+\omega_n]$);
\item the supports of $(\phi_n^-)'$ and $(\phi_n^-)''$ (resp. $(\phi_n^+)'$ and $(\phi_n^+)''$) are included into $[x_0-\varepsilon_n+\omega_n,x_0-\varepsilon_n+2 \omega_n] \cup [x_0+\varepsilon_n-2\omega_n,x_0+\varepsilon_n - \omega_n]$ (resp. $[x_0-\varepsilon_n- 2\omega_n,x_0-\varepsilon_n - \omega_n] \cup [x_0+\varepsilon_n + \omega_n,x_0+\varepsilon_n + 2\omega_n]$);
\item $||(\phi^-_n)'||_{\infty} = || (\phi^+_n)' ||_{\infty} = O(1/\omega_n)$ and $||(\phi^-_n)''||_{\infty} = || (\phi^+_n)'' ||_{\infty} = O(1/\omega_n^2)$.
\end{enumerate}
By construction,
\begin{equation}\label{eq:IndicBound}
\begin{cases} 
\int_{\mathbf{R}} \phi^-_n(\lambda) \mathrm{d}\mu_{(S_n,v^{(n)}_1)}(\lambda) \leq \int_{I_{\varepsilon_n}(x_0)} \mathrm{d}\mu_{(S_n,v^{(n)}_1)}(\lambda) \leq \int_{\mathbf{R}} \phi^+_n(\lambda) \mathrm{d}\mu_{(S_n,v^{(n)}_1)}(\lambda), \\
\int_{\mathbf{R}} \phi^-_n(\lambda) \mathrm{d}\mu_{S_n}(\lambda) \leq \int_{I_{\varepsilon_n}(x_0)} \mathrm{d}\mu_{(S_n,v^{(n)}_1)}(\lambda) \leq \int_{\mathbf{R}} \phi^+_n(\lambda) \mathrm{d}\mu_{S_n}(\lambda).
\end{cases}  
\end{equation}
The main result of this section is an estimate on both sides of the above inequalities.

\begin{lemme}\label{lem:Helff}
Let $\epsilon \in \{-,+\}$. There exists $D>0$ such that, with probability at least $1-n^{-D}$,
\begin{equation}\label{eq:Helff1}
\left| \int_{\mathbf{R}} \phi_n^\epsilon(\lambda) \mathrm{d} \left( \mu_{(S_n,v_1^{(n)})} - \mu_{\alpha,\Sigma,\theta} \right) (\lambda)\right| = O\left( \omega_n \right)
\end{equation}
and
\begin{equation}\label{eq:Helff2}
\left| \int_{\mathbf{R}} \phi_n^\epsilon(\lambda) \mathrm{d} \left( \mu_{S_n} - \mu_{\mathrm{MP},\alpha} \right) (\lambda)\right| = O\left( \omega_n \right).
\end{equation}
\end{lemme} 
Before giving the proof of Lemma \ref{lem:Helff}, let us explain how it leads to Theorem \ref{theo: square proj Wishart general case}.
\begin{proof}[Proof of Theorem \ref{theo: square proj Wishart general case}]
Combining estimates \eqref{eq:Helff1} and \eqref{eq:Helff2} with Equation \ref{eq:IndicBound}, for any $\varepsilon >0$, there exists $D>0$ such that with probability at least $1 - n^{-D}$:
\begin{equation}  \int_{ I_{\varepsilon_n}(x_0) } \mathrm{d}\mu_{(S_n,v_1^{(n)})}(x) = 2 \varepsilon_n f_{ \alpha, \Sigma, \theta } (x_0) + O\left(  \omega_n  \right)
\label{eq: small scale spectral measure} 
\end{equation}
and
\begin{equation}  \int_{ I_{\varepsilon_n}(x_0) } \mathrm{d}\mu_{S_n}(x) = 2 \varepsilon_n f_{ \alpha, \Sigma } (x_0) + O\left(  \omega_n \right). 
\label{eq: small scale empirical measure}
\end{equation}
Therefore, since $\varepsilon_n > \! \! > n^{-1/2}$, for all $\varepsilon>0$, there exists $D>0$ such that with probability at least $1-n^{-D}$,
\begin{align*}  
\frac{n}{\left| \mathcal{I}_{\varepsilon_n}(x_0) \right|} \sum\limits_{ i \in \mathcal{I}_{\varepsilon_n}(x_0)} \left| \langle \phi_i , e_1 \rangle  \right|^2  
&=  \frac{ \int_{ I_{\varepsilon_n}(x_0) } \mathrm{d}\mu_{(S_n,v_1^{(n)})}(\lambda) }{  \int_{ I_{\varepsilon_n}(x_0) } \mathrm{d}\mu_{S_n}(\lambda)  } \\
&= \frac{f_{ \alpha, \Sigma, \theta } (x_0)}{f_{ \alpha, \Sigma } (x_0)} + O\left(  \frac{\omega_n}{\varepsilon_n}  \right).
\end{align*}
This ends the proof of Theorem \ref{theo: square proj Wishart general case} since $\varepsilon_n > \! \! > \omega_n$.
\end{proof}

We now turn to the proof of Lemma \ref{lem:Helff}. We will use a local law on the Stieltjes transform of $\mu_{(S_n,v_1^{(n)})}$ which can be obtained by taking $v=w=v_1^{(n)}$ in Equation \eqref{eq: estimate local law Wishart}. Recall the following definitions:
\[ \mathcal{D}_n^{(\tau)}(x,c) = \left\{ z \in \mathbf{C}, \, x-c \leq E \leq x+c, \, n^{-1 + \tau} \leq \eta \leq \tau^{-1}    \right\} \]
and
\[ \psi_n (z)= \sqrt{ \frac{\Im \left( s_{ \mu_{\mathrm{MP},\alpha} \boxtimes \mu_\Sigma } (z) \right) }{n \eta} } + \frac{1}{n \eta} .\]
Since $x_0$ is such that $f_{\alpha, \Sigma}(x_0)>0$, there exists $c>0$ small enough such that Equation \eqref{eq:positivedens2} is satisfied. We fix such a $c$ for the rest of this section. Then, Theorem \ref{theo: local law Wishart} translates into the following result.

\begin{Theo}{\bf \cite[Corollary 3.9]{MR3704770}} \label{th:LocLawStieltjes}
For any $\tau >0$, uniformly in $z \in \mathcal{D}_n^{(\tau)}(x_0,c)$, for any $\varepsilon > 0$, there exists $D >0$ such that
\begin{equation}
\PP\left( \left| s_{(S_n,v_1^{(n)})}(z) - s_{\mu_{\alpha,\Sigma,\theta}}(z)  \right| \geq n^\varepsilon       \psi(z)  \right) \leq \frac{1}{n^D}.
\label{eq:loclaw}
\end{equation}
\end{Theo}

In the proof of Lemma \ref{lem:Helff}, we will use the following classical estimate on the error function.

\begin{lemme}\label{lem:psi}
For all $x\in \mathbf{R}$ such that $f_{\alpha,\Sigma}(x)>0$,
\begin{equation}\label{eq:psi1}
|\psi_n(x+iy)| = O\left( \sqrt{ \frac{\Im \left( s_{ \mu_{\mathrm{MP},\alpha} \boxtimes \mu_\Sigma } (x+iy) \right) }{n y} } \right).
\end{equation}
In particular, for all $1 \leq |y| \leq 2$, 
\begin{equation}\label{eq:psi2}
|\psi_n(x+iy)| = O\left( n^{-1/2} \right).
\end{equation}
\end{lemme}
\begin{proof}
Let $x \in \mathbf{R}$ be such that $f_{\alpha,\Sigma}(x)>0$. Then, since
\[ f_{\alpha,\Sigma}(x) = \frac{1}{\pi} \lim\limits_{t \rightarrow 0^+} \Im \left( s_{ \mu_{\mathrm{MP},\alpha} \boxtimes \mu_\Sigma } (x+it) \right),  \]
the leading term in the expression of $\psi_n(x+iy)$ is $\sqrt{ \frac{\Im \left( s_{ \mu_{\mathrm{MP},\alpha} \boxtimes \mu_\Sigma } (x+iy) \right) }{n y} }$, which proves \eqref{eq:psi1}. This latter term is of order $n^{-1/2}$ whenever $y$ is bounded away from $0$, which implies \eqref{eq:psi2}.
\end{proof}

We are now ready to prove Lemma \ref{lem:Helff}. The strategy is based on the Helffer-Sj\"ostrand formula, which allows to translate an estimate of the form \eqref{eq:loclaw} into an estimate on sufficiently regular functions integrated against $\mu_{(S_n,v^{(n)}_1)}$. Although the argument is standard and can be found for the empirical spectral measure of a Wigner matrix in the survey of Benaych-Georges and Knowles \cite{MR3792624}, we choose to provide the details as it has not been done for the spectral measures. A similar argument is also present in the work of Benaych-Georges, Enriquez and Micha{\"\i}l \cite{benaych2018eigenvectors}. 

\begin{proof}[Proof of Lemma \ref{lem:Helff}] 
We first prove estimate \eqref{eq:Helff1} that corresponds to the spectral measure and then explain how to adapt the proof for the second inequality \eqref{eq:Helff2} which corresponds to the empirical spectral measure. We only focus on the case $\epsilon = +$ because the case $\epsilon = -$ follows from the same argument. In order to lighten notations, we denote $\phi_n =\phi_n^+$.

For all $x \in \mathbf{R}$, by the the Helffer-Sj\"ostrand formula (see \cite[Proposition $C.1$]{MR3792624} ):
\begin{equation}  \phi_n(x) =  \int_{\mathbf{C}} \frac{ \overline{\partial}\left( \tilde{\phi}_n(z) \chi(z)  \right) }{x-z} \mathrm{d}z, 
\label{eq: Helffer-Sjostrand}
\end{equation}
where:
\begin{itemize}
\item $\chi$ is a smooth symmetric cutoff function that equals $1$ on $[-1,1]$ and $0$ outside $[-2,2]$;
\item $\tilde{\phi}_n$ is the quasi-analytic extension of degree $1$ of $\phi_n$, defined by $\tilde{\phi}_n(x+iy) = \phi_n(x) + iy \phi'_n(x)$;
\item $\overline{\partial} = \frac{1}{2}\left( \partial_n + i \partial_y \right)$.
\end{itemize}
Let us define
\[ \hat{\mu}_n := \mu_{(S_n,v_1^{(n)})} - \mu_{\alpha,\Sigma,\theta} \]
and its Stieltjes transform $\hat{s}_n := s_{\mu_{(S_n,v_1^{(n)})}} - s_{\mu_{\alpha,\Sigma,\theta}}$. Equation \eqref{eq: Helffer-Sjostrand} leads to:
\begin{align*} 
\int_{\mathbf{R}} \phi_n(\lambda) \mathrm{d}\hat{\mu}_n(\lambda) = &\phantom{+b} \frac{i}{2 \pi} \int_{x \in \mathbf{R}} \int_{y \in \mathbf{R}} \phi''_n(x) y \chi (y) \hat{s}_n(x+iy) \mathrm{d}x \mathrm{d}y \\
&+ \frac{i}{2 \pi} \int_{x \in \mathbf{R}} \int_{y \in \mathbf{R}} \left( \phi_n(x) + iy\phi'_n(x)  \right) \chi'(y) \hat{s}_n(x+iy) \mathrm{d}x \mathrm{d}y.
\end{align*}
Remark that the right-hand-side is real so that
\begin{align} 
\int_{\mathbf{R}} \phi_n(\lambda) \mathrm{d}\hat{\mu}_n(\lambda) \leq &\phantom{+b} \frac{-1}{2 \pi} \int_{x \in \mathbf{R}} \int_{|y| \leq \omega_n} \phi''_n(x) y \chi (y) \Im \left(\hat{s}_n (x+iy)\right) \mathrm{d}x \mathrm{d}y \label{eq:Term1} \\
&+ \frac{-1}{2 \pi} \int_{x \in \mathbf{R}} \int_{|y| \geq \omega_n} \phi''_n(x) y \chi (y) \Im \left(\hat{s}_n (x+iy) \right) \mathrm{d}x \mathrm{d}y \label{eq:Term2} \\
&+ \left| \frac{1}{2 \pi} \int_{x \in \mathbf{R}} \int_{y \in \mathbf{R}} \left( \phi_n(x) + iy\phi'_n(x)  \right) \chi'(y) \hat{s}_n (x+iy) \mathrm{d}x \mathrm{d}y \right| \label{eq:Term3} .
\end{align}
We now estimate all of the three terms of the right-hand side. In what follows, we fix $\tau > 0$ and $\varepsilon >0$ and we argue on the event
\begin{equation} \label{eq:EventE}
E_\varepsilon := \left\{ \forall z \in \mathcal{D}_n^{(\tau)}(x_0,c), \,  \left| s_{(S_n,v_1^{(n)})}(z) - s_{\mu_{\alpha,\Sigma,\theta}}(z)  \right| \leq n^\varepsilon \psi_n (z)  \right\} .
\end{equation}
Note that by the local law \eqref{eq:loclaw}, there exists $D>0$ such that $\PP(E) \geq 1 - n^{-D}$. 
\paragraph*{Estimation of the term \eqref{eq:Term1}.}
Recall that, from the definition of $\phi_n$, given in Equation \eqref{eq:RegFunctions}, the support of $\phi_n''$ is contained in 
\begin{equation}\label{eq:defn:intervals}
I_n := [x_0-\varepsilon_n- 2\omega_n,x_0-\varepsilon_n - \omega_n] \cup [x_0+\varepsilon_n + \omega_n,x_0+\varepsilon_n + 2\omega_n].
\end{equation}
Moreover, since $\omega_n <\! \!<1$, $\chi(y)=1$ when $|y| \leq \omega_n$. Therefore:
\begin{align} 
\left| \hspace{-0.3cm} \phantom{\frac{1}{1}} \right. &  \frac{-1}{2 \pi}   \left. \int_{x \in \mathbf{R}} \int_{|y| \leq \omega_n} \phi''_n(x) y \chi (y) \Im \left(\hat{s}_n (x+iy)\right) \mathrm{d}x \mathrm{d}y  \right| \nonumber \\
&\leq \frac{1}{2 \pi}  \int_{x \in I_n} \int_{0 \leq y \leq \omega_n} \left|\phi''_n(x) y \Im \left(\hat{s}_n (x+iy)\right) \right| \mathrm{d}x \mathrm{d}y + \frac{1}{2 \pi} \int_{x \in I_n} \int_{-\omega_n \leq y < 0} \left|\phi''_n(x) y \Im \left(\hat{s}_n (x+iy)\right) \right| \mathrm{d}x \mathrm{d}y  \label{eq:FirstTerm}
\end{align}
We only treat in details the first term of \eqref{eq:FirstTerm} as the second one can be analyzed similarly.

Let $x \in I_n$. The function $y \mapsto y \Im \left(  s_{\mu_{(S_n,v_1^{(n)})} }(x+iy) \right)$ is non-decreasing, which implies that: 
\begin{equation} \label{eq:MonotonyIm}
\forall 0 \leq y \leq \omega_n, \quad y \Im \left(\hat{s}_n (x+iy)\right)   \leq  y \Im \left( s_{\mu_{(S_n,v_1^{(n)})} } (x+iy)\right)  \leq \omega_n \Im \left( s_{\mu_{(S_n,v_1^{(n)})} } (x+i\omega_n)\right). 
\end{equation}
By assumptions, the point $x_0 \in \mathbf{R}$ is such that $f_{\alpha,\Sigma,\theta}(x_0)>0$. Since $\lim\limits_{t \rightarrow 0^+} \Im \left( s_{\mu_{\alpha,\Sigma,\theta}}(x_0 + it  \right) =  \pi f_{\alpha,\Sigma,\theta}(x_0)$, this implies that there exists a constant $\tilde{C}>0$ such that, for large enough $n$:
\[  \forall x \in I_n, \, \forall 0 \leq y \leq \omega_n, \quad  \Im \left( s_{\mu_{\alpha,\Sigma,\theta}}(x + iy)  \right) \leq \tilde{C}.   \]
Therefore, since we are working on the event $E_\varepsilon$ introduced in \eqref{eq:EventE}, uniformly in $x \in I_n$:
\begin{equation} \label{eq:BoundStielt}
\Im \left(  s_{\mu_{(S_n,v_1^{(n)})} } (x+i \omega_n)\right) \leq  n^\varepsilon \psi_n(x+i \omega_n) + \tilde{C}.  
\end{equation}
By Equation \eqref{eq:psi1} of Lemma \ref{lem:psi}, $n^\varepsilon \psi_n(x+i \omega_n)$ converges to zero as $n$ tends to infinity. Therefore, by combining Inequalities \eqref{eq:MonotonyIm} and \eqref{eq:BoundStielt}, we deduce the existence of some constant $C>0$ such that:
\begin{equation} \label{eq:BoundFirstTerm}
\int_{x \in I_n} \int_{0 \leq y \leq \omega_n} \left|\phi''_n(x) y \Im \left(\hat{s}_n (x+iy)\right) \right| \mathrm{d}x \mathrm{d}y \leq  C \omega_n \int_{x \in I_n} \int_{0 \leq y \leq \omega_n} |\phi''_n(x)|  \mathrm{d}x \mathrm{d}y.
\end{equation}
Finally, using that $||\phi_n''||_\infty = O(1/\omega_n^2)$ and $\int_{I_n} \mathrm{d}x = 2 \omega_n$ in \eqref{eq:BoundFirstTerm} yields:
\begin{equation} \label{eq:PreFinalBoundFirstTerm}
\int_{x \in I_n} \int_{0 \leq y \leq \omega_n} \left|\phi''_n(x) y \Im \left(\hat{s}_n (x+iy)\right) \right| \mathrm{d}x \mathrm{d}y \leq O \left( \omega_n \right).
\end{equation}
As already mentioned, the same argument implies that the bound \eqref{eq:PreFinalBoundFirstTerm} also holds if the domain of integration was $I_n \times[-\omega_n,0)$. Hence, we proved that, on the event $E_\varepsilon$,
\begin{equation} \label{eq:bound1}
\left|  \frac{-1}{2 \pi}  \int_{x \in \mathbf{R}} \int_{|y| \leq \omega_n} \phi''_n(x) y \chi (y) \Im \left(\hat{s}_n (x+iy)\right) \mathrm{d}x \mathrm{d}y  \right| = O(\omega_n).
\end{equation}

\paragraph*{Estimation of the term \eqref{eq:Term2}.}

We first decompose the term according to the sign of $y$.
\begin{align}
& \frac{-1}{2 \pi}  \int_{x \in \mathbf{R}}  \int_{|y| \geq \omega_n} \phi''_n(x) y \chi (y) \Im \left(\hat{s}_n (x+iy) \right) \mathrm{d}x \mathrm{d}y \nonumber  \\
 &= \frac{-1}{2 \pi} \int_{x \in \mathbf{R}}  \int_{y\leq -\omega_n} \phi''_n(x) y \chi (y) \Im \left(\hat{s}_n (x+iy) \right) \mathrm{d}x \mathrm{d}y + \frac{-1}{2 \pi} \int_{x \in \mathbf{R}}  \int_{y \geq \omega_n} \phi''_n(x) y \chi (y) \Im \left(\hat{s}_n (x+iy) \right) \mathrm{d}x \mathrm{d}y \label{eq:SecondTerm}.
\end{align}
The two terms on the right-hand side of \eqref{eq:SecondTerm} can be analyzed in the same way, so that we only focus on the case where $y \geq \omega_n$.

Differentiating with respect to $x$ and $y$ and using that $\partial_x \Im ( \hat{s}_n (x+iy) ) = - \partial_y \Re ( \hat{s}_n (x+iy))$, we obtain:
\begin{align}
\frac{-1}{2 \pi} & \int_{x \in \mathbf{R}} \int_{y \geq \omega_n} \phi''_n(x) y \chi (y) \Im \left(\hat{s}_n (x+iy) \right) \mathrm{d}x \mathrm{d}y \nonumber \\
                 &= \frac{-1}{2 \pi} \int_{y \geq \omega_n} y \chi(y) \mathrm{d}y \times (-1) \int_{x \in \mathbf{R}} \phi_n'(x) \partial_x \Im \left( \hat{s}_n(x+iy)  \right) \mathrm{d}x \nonumber  \\
                 &= \frac{-1}{2 \pi} \int_{ x \in \mathbf{R}}  \phi_n'(x) \mathrm{d}x \left(  \omega_n \Im \left( \hat{s}_n(x+i \omega_n)   \right)  - \int_{y > \omega_n} (y \chi'(y) + \chi(y))  \Im \left( \hat{s}_n(x+iy)   \right)  \mathrm{d}y  \right) \nonumber \\
                 &= \frac{-1}{2 \pi} \int_{ x \in \mathbf{R}} \phi_n'(x)  \omega_n \Im \left( \hat{s}_n(x+i \omega_n)   \right) \mathrm{d}x \label{eq:align:21} \\
                 &\phantom{=} \hspace{2.0cm}+  \frac{1}{2 \pi} \int_{ x \in \mathbf{R}} \int_{y > \omega_n} \phi_n'(x) y \chi'(y) \Im \left( \hat{s}_n(x+iy)   \right) \mathrm{d}y \mathrm{d}x \label{eq:align:22} \\ 
                 &\phantom{=} \hspace{2.0cm}+ \frac{1}{2 \pi} \int_{ x \in \mathbf{R}} \int_{y > \omega_n} \phi_n'(x) \chi(y) \Im \left( \hat{s}_n(x+iy)   \right) \mathrm{d}y \mathrm{d}x \label{eq:align:23} .
\end{align}
The first term \eqref{eq:align:21} can be bounded as follows, recalling that $\phi_n'$ is supported on $I_n$ (defined in Equation \eqref{eq:defn:intervals}):
\begin{equation*}   
\left| \frac{-1}{2 \pi} \int_{ x \in \mathbf{R}}  \phi_n'(x) \omega_n \Im \left( \hat{s}_n(x+i \omega_n)   \right) \mathrm{d}x  \right| 
\leq \frac{1}{2 \pi} ||\phi_n'||_\infty \omega_n \int_{I_n} \left| \Im \left( \hat{s}_n(x+i \omega_n)   \right) \right| \mathrm{d}x.
\end{equation*}
By the definition of $\phi_n$, it holds that $||\phi_n'||_\infty = O(1/\omega_n)$. Combining this with inequality \eqref{eq:BoundStielt}, which holds uniformly in $x \in I_n$, we get that:
\begin{equation} \label{eq:bound21}   
\left| \frac{-1}{2 \pi} \int_{ x \in \mathbf{R}}  \phi_n'(x) \omega_n \Im \left( \hat{s}_n(x+i \omega_n)   \right) \mathrm{d}x  \right| = O(\omega_n).
\end{equation}

The second term \eqref{eq:align:22} can be bounded as follows, using that $\chi'$ is supported on $[-2,-1] \cup [1,2]$:
\[ \left| \frac{1}{2 \pi} \int_{ x \in \mathbf{R}} \int_{ y \geq \omega_n} \phi_n'(x)y \chi'(y)   \Im \left( \hat{s}_n(x+iy)   \right) \mathrm{d}y \mathrm{d}x \right| \leq \frac{1}{2 \pi} ||\phi_n'||_\infty \int_{ x \in I_n } \int_{1\leq  y \leq 2} y \chi'(y) \Im \left( \hat{s}_n(x+iy)   \right) \mathrm{d}x \mathrm{d}y.  \]
Since we are working on the event $E_\varepsilon$, $|\Im \left( \hat{s}_n(x+iy)   \right)| \leq n^\varepsilon \psi_n(x+iy)$. Therefore, using Lemma \ref{lem:psi}:
\begin{equation} \label{eq:bound22}
\left| \frac{1}{2 \pi} \int_{ x \in \mathbf{R}} \int_{ y \geq \omega_n} \phi_n'(x)y \chi'(y)   \Im \left( \hat{s}_n(x+iy)   \right) \mathrm{d}y \mathrm{d}x \right| = O\left(  \frac{n^\varepsilon}{\sqrt{n}}  \right).
\end{equation}

Finally, we bound the last term \eqref{eq:align:23}. Since $\phi_n'$ is supported on $I_n$, we can replace the integral over $\mathbf{R}$ by an integral over $I_n$. Moreover, using \eqref{eq:psi1}, we get that:
\begin{align} 
\left| \frac{1}{2 \pi} \int_{ x \in \mathbf{R}} \int_{y > \omega_n} \phi_n'(x) \chi(y) \Im \left( \hat{s}_n(x+iy)   \right) \mathrm{d}y \mathrm{d}x \right| 
&\leq \frac{1}{2 \pi}||\phi_n'||_\infty \int_{I_n} \int_{\omega_n}^2 \frac{n^\varepsilon}{\sqrt{n y}} \mathrm{d}y \mathrm{d}x  \nonumber \\ 
&= O \left( \frac{n^\varepsilon}{\sqrt{n}} \right) \label{eq:bound23}.
\end{align}

Hence, putting together \eqref{eq:bound21}, \eqref{eq:bound22} and \eqref{eq:bound23}, we proved that, on the event $E_\varepsilon$:
\begin{equation}  \left| \frac{-1}{2 \pi} \int_{x \in \mathbf{R}} \int_{|y| \geq \omega_n} \phi''_n(x) y \chi (y) \Im \left(\hat{s}_n (x+iy) \right) \mathrm{d}x \mathrm{d}y \right| = O\left( \max \left\{ \frac{n^{\varepsilon} }{n^{1/2} }, \omega_n \right\} \right) 
\label{eq:bound2}
\end{equation}

\paragraph*{Estimation of the term \eqref{eq:Term3}.}
Since $\chi'$ is symmetric and supported on $[-2,-1] \cup [1,2]$, there exists some constant $C>0$ such that:
\begin{multline*}
\left| \frac{1}{2 \pi} \int_{x \in \mathbf{R}} \int_{y \in \mathbf{R}} \left( \phi_n(x) + iy\phi'_n(x)  \right) \chi'(y) \hat{s}_n (x+iy) \mathrm{d}x \mathrm{d}y \right| \\
 \leq \frac{C}{2 \pi} \int_{x \in [x_0 - \varepsilon_n - 2 \omega_n, x_0 + \varepsilon_n + 2 \omega_n]} \int_{y \in [-2,-1] \cup [1,2]} \left( 1 +  ||\phi'_n||_\infty  \right) \left| \hat{s}_n (x+iy) \right| \mathrm{d}x \mathrm{d}y .
\end{multline*}
By Lemma \ref{lem:psi}, uniformly in $x \in [x_0 - \varepsilon_n - 2 \omega_n, x_0 + \varepsilon_n + 2 \omega_n]$ and $y \in [-2,-1] \cup [1,2]$, $\left| \hat{s}_n (x+iy) \right| \leq \frac{n^\varepsilon}{\sqrt{n}}$. Therefore:
\begin{equation} \label{eq:bound3}
\left| \frac{1}{2 \pi} \int_{x \in \mathbf{R}} \int_{y \in \mathbf{R}} \left( \phi_n(x) + iy\phi'_n(x)  \right) \chi'(y) \hat{s}_n (x+iy) \mathrm{d}x \mathrm{d}y \right| = O \left( \frac{n^\varepsilon}{\sqrt{n}}  \right).
\end{equation}

\paragraph*{Conclusion}
Putting together estimates \eqref{eq:bound1}, \eqref{eq:bound2} and \eqref{eq:bound3}, we proved that, for all $\varepsilon>0$, on the event $E_\varepsilon$:
\begin{equation}
\left| \int_{\mathbf{R}} \phi_n(\lambda) \mathrm{d}\hat{\mu}_n(\lambda)   \right| = O\left( \max \left\{ \frac{n^{\varepsilon} }{n^{1/2} }, \omega_n \right\} \right) = O(\omega_n) .
\end{equation}
Now, recall from \eqref{eq:omega} that $\omega_n > \! \! > n^\delta / \sqrt{n}$. This implies that for all $0 <\varepsilon \leq \delta$:
\[ \max \left\{ \frac{n^{\varepsilon} }{n^{1/2} }, \omega_n \right\} = \omega_n.  \]
Hence, there exists $D>0$ such that with probability at least $1-n^{-D}$:
\[ \left| \int_{\mathbf{R}} \phi_n(\lambda) \mathrm{d}\hat{\mu}_n(\lambda)   \right| = O \left( \omega_n \right). \]
This ends the proof of the first part of Lemma \ref{lem:Helff}.

\bigskip

It remains to explain how to adapt the above argument to obtain the second estimate \eqref{eq:Helff2}. Since it is concerned with the {\it empirical} spectral measure $\mu_{S_n}$, we need a local law for this quantity, which is an analog of Theorem \ref{th:LocLawStieltjes} in this context, and is a consequence of the work of Knowles and Yin. Recall that 
\[ \mathcal{D}_n^{(\tau)}(x_0,c) = \left\{ z \in \mathbf{C}, \, x_0-c \leq E \leq x_0+c, \, n^{-1 + \tau} \leq \eta \leq \tau^{-1}    \right\} . \]
\begin{Theo}{\bf \cite[Theorem 3.22]{MR3704770}}
Let $s_{\mu_{S_n}}$ be the Stieltjes transform of $\mu_{S_n}$. Let $\tau > 0$. Then, uniformly in $z  \in \mathcal{D}_n^{(\tau)}(x_0,c)$, for any $\varepsilon >0$, there exists $D>0$ such that:
\[ \PP \left( \left| s_{\mu_{S_n}}(z) - s_{\mu_{\mathrm{MP},\alpha} \boxtimes \mu_\Sigma}(z) \right| \geq \frac{n^\varepsilon}{n \Im(z)} \right) \leq \frac{1}{n^D} .  \]
\end{Theo}
As before, we only discuss the case where $\epsilon = +$ and denote $\phi_n = \phi_n^+$. As in the case of the spectral measure, denoting $\tilde{\mu}_n := \mu_{(S_n,v_1^{(n)})} - \mu_{\alpha,\Sigma,\theta}$ and $\tilde{s}_n := s_{\mu_{S_n}} - s_{\mu_{\mathrm{MP},\alpha} \boxtimes \mu_\Sigma}$, the Helffer-Sj\"ostrand formula leads to:
\begin{align} 
\int_{\mathbf{R}} \phi_n(\lambda) \mathrm{d}\tilde{\mu}_n(\lambda) \leq &\phantom{+b} \frac{-1}{2 \pi} \int_{x \in \mathbf{R}} \int_{|y| \leq \omega_n} \phi''_n(x) y \chi (y) \Im \left(\tilde{s}_n (x+iy)\right) \mathrm{d}x \mathrm{d}y \label{eq:Term1prime} \\
&+ \frac{-1}{2 \pi} \int_{x \in \mathbf{R}} \int_{|y| \geq \omega_n} \phi''_n(x) y \chi (y) \Im \left(\tilde{s}_n (x+iy) \right) \mathrm{d}x \mathrm{d}y \label{eq:Term2prime} \\
&+ \left| \frac{1}{2 \pi} \int_{x \in \mathbf{R}} \int_{y \in \mathbf{R}} \left( \phi_n(x) + iy\phi'_n(x)  \right) \chi'(y) \tilde{s}_n (x+iy) \mathrm{d}x \mathrm{d}y \right| \label{eq:Term3prime} .
\end{align}
Now, each term \eqref{eq:Term1prime}, \eqref{eq:Term2prime} and \eqref{eq:Term3prime} can be treated in the same way as the previous terms \eqref{eq:Term1}, \eqref{eq:Term2} and \eqref{eq:Term3}. The only difference is that each occurence of the former error term $\psi_n(z)$ is now replaced by the error term of Theorem \ref{th:LocLawStieltjes}, namely $(n \Im(z))^{-1}$. 
\end{proof}

We underline that a weaker version of Theorem \ref{theo: square proj Wishart general case} (resp. Theorem \ref{theo: square proj Wigner general case}) can be obtained as long as a uniform estimation of $s_{(S_n,v_1^{(n)})}(z) - s_{\mu_{\alpha, \Sigma, \theta}}(z)$ is available for $z$ in a domain of the upper half-plane that is allowed to approach the real axis as $n$ becomes larger. Indeed, if
\[  \left| s_{(S_n,v_1^{(n)})}(z) - s_{\mu_{\alpha, \Sigma, \theta}}(z) \right| = O\left( \varepsilon_n \right), \] 
then, the Helffer-Sj\"ostrand argument that we developed during the proof of Theorem \ref{theo: square proj Wishart general case} yields a convergence of the averaged-square projection onto the direction of the spike for averaging windows of size $\varepsilon_n$, as long as $\varepsilon_n > \! \! > n^\delta / \sqrt{n}$ for some $\delta >0$. This limitation corresponds to the optimal rate in the local laws of Knowles and Yin \eqref{eq:loclaw}.

One natural question would be to weaken the assumption on the size $\varepsilon_n$ of the averaging window: do our Theorems \ref{theo: averaged explicit Wigner}, \ref{theo: averaged explicit Wishart}, \ref{theo: square proj Wigner general case} and \ref{theo: square proj Wishart general case} hold as soon as $\varepsilon_n = o(1)$? We believe that the answer is positive (see Figure \ref{fig: conjecture} for simulations). Morevoever, the results of \cite{MR3449395}, which state the convergence in law of properly rescaled {\it individual} square projection of eigenvectors associated to eigenvalues in the vicinity of the edge, suggest the following natural question: does such a convergence also holds in the bulk of the spectrum?  

\begin{figure}[ht!]
\centering
\begin{subfigure}{.4\textwidth}
  \centering
  \includegraphics[scale=0.4]{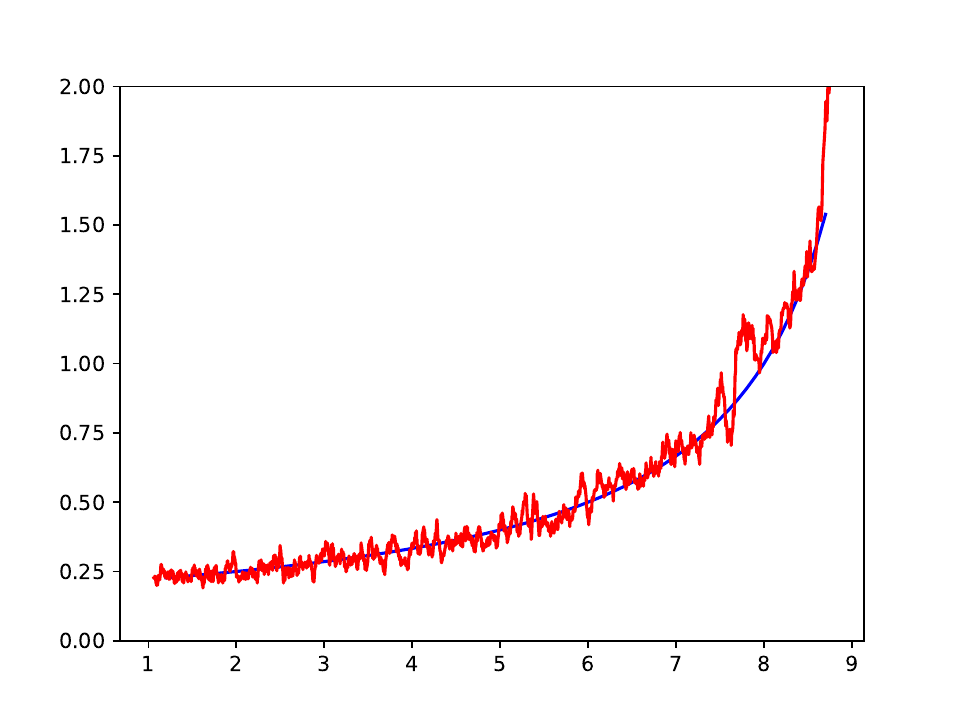} 
  \caption{}
  \label{fig: subfig1}
\end{subfigure}%
\begin{subfigure}{.4\textwidth}
  \centering
  \includegraphics[scale=0.4]{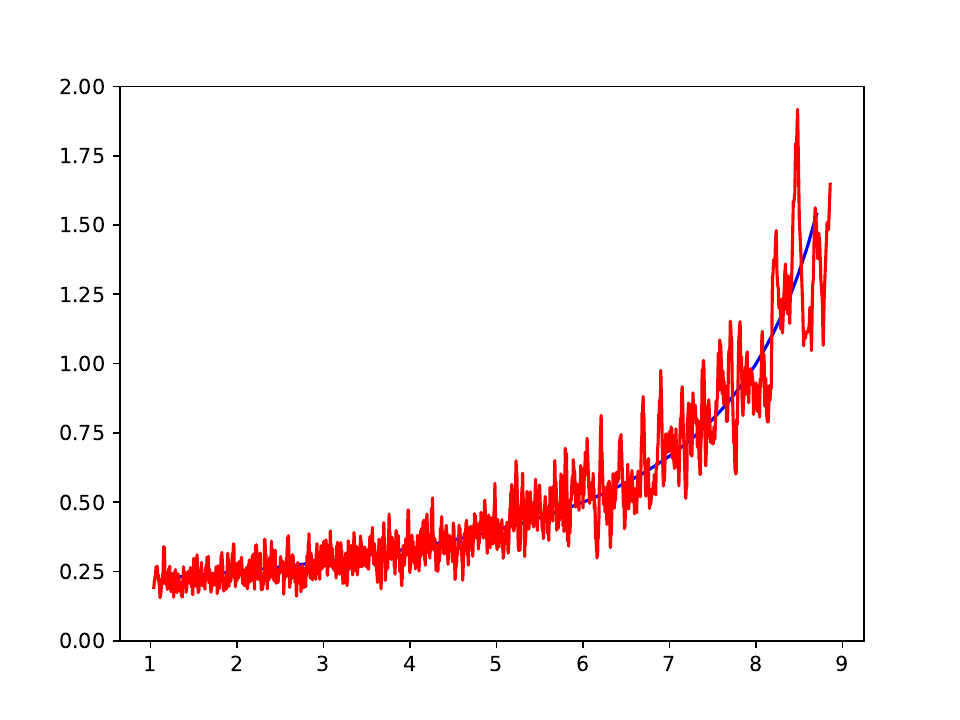} 
  \caption{}
  \label{fig: subfig2}
\end{subfigure}
\caption{{\footnotesize In red: simulations of the average squared projections around all locations $x\in((1-\sqrt{\alpha})^2,(1+\sqrt{\alpha})^2)$ where we took average over interval of typical size $n^{0,3}$ for \ref{fig: subfig1} and $n^{0,2}$ for \ref{fig: subfig2}, for $10$ independent matrices of the form $S_n = n^{-1} \mathrm{Diag}(\sqrt{\theta},1, \ldots, 1) X_nX_n^T \mathrm{Diag}(\sqrt{\theta},1, \ldots, 1)$ where $X_n$ is gaussian rectangular of size $2000 \times 8000$ ($\alpha=4$) in case \ref{fig:sfig11} and of size $3000 \times 12000$ ($\alpha=4$) in case \ref{fig:sfig21}. In each case $\theta = 2$. In blue: theoretical predictions.}}
\label{fig: conjecture}
\end{figure}

\paragraph*{Acknowledgment} I am very thankful to Nathana\"el Enriquez for many suggestions about this work. Many thanks to Maxime F\'evrier for numerous insightful discussions and for his careful reading of an earlier version of this paper. I would also like to thank Laurent M\'enard for his advices.

\bibliographystyle{plain}

\bigskip
\noindent Nathan Noiry :\\
Laboratoire Modal'X, \\
UPL, Université Paris Nanterre,\\ 
F92000 Nanterre France

\end{document}